\numberwithin{equation}{section}
\newtheorem{theorem}{Theorem}[section]
\newtheorem{lemma}[theorem]{Lemma}
\newtheorem{proposition}{Proposition}
\theoremstyle{definition}
\newtheorem{definition}[theorem]{Definition}
\newtheorem{remark}{Remark}
\newtheorem{assumption}{Assumption} 
\newtheorem{theoremm}{Theorem}  
\newtheorem*{fact}{Fact} 
\begin{document}
\vskip 5.1cm
\title{Formula of Entropy along Unstable Foliations for $C^1$ Diffeomorphisms with Dominated Splitting
\footnotetext{\\
\emph{ 2010 Mathematics Subject Classification}: 37A35, 37D30, 37C40.\\
\emph{Keywords and phrases}:metric entropy along unstable foliations, dominated splitting, Lyapunov exponents, Ruelle inequality, Pesin's entropy formula.\\
It is supported by NSFC(No:11371120, 11771118).}}
\author {Xinsheng Wang$^1$, Lin Wang$^2$ and Yujun Zhu$^3$ \\
\small {1. College of Mathematics and Information Science}\\
\small {Hebei Normal University, Shijiazhuang, 050024, China}\\
\small {2. School of Applied Mathematics}\\
\small {Shanxi University of Finance and Economics, Taiyuan, 030031, China}\\
\small {3. School of Mathematical Sciences}\\
\small {Xiamen University, Xiamen, 361005, China}
}
\date{}
\maketitle

\begin{abstract}
Metric entropies along a hierarchy of unstable foliations are investigated for $C^1$ diffeomorphisms with dominated splitting. The analogues of Ruelle's inequality and Pesin's formula, which relate the metric entropy and Lyapunov exponents in each hierarchy, are given.
\end{abstract}

\section{Introduction}
It is well known that among the major concepts of smooth ergodic theory are the notions of invariant measures, entropy and Lyapunov exponents. Entropies, including measure-theoretic entropy and topological entropy, play important roles in the study of the complexity of a dynamical system. Intuitively, topological entropy measures the exponential growth rate in $n$ of the number of orbits of length $n$ up to a small error, measure-theoretic entropy gives the maximum average information with respect to some invariant measure one can get from a system. While Lyapunov exponents reflect the rate at which two nearby orbits separate from each other. What interests one is the relation between entropy and Lyapunov exponents. Let $f$ be a $C^1$ diffeomorphism on a compact Riemannian manifold $M$ without boundary. For any regular point in the sense of Oseledec \cite{Oseledec1968} $x\in M$, let $\lambda_1(x)>\lambda_2(x)>\dots>\lambda_{r(x)}(x)$ denote its distinct Lyapunov exponents, and $E_1(x)\oplus\dots\oplus E_{r(x)}(x)$ be the corresponding decomposition of its tangent space $T_xM$. In 1970s, Ruelle \cite{Ruelle1978} gave the following inequality
\[
h_{\mu}(f)\leq\int_{M}\sum_{\lambda_i(x)>0}\lambda_i(x)m_i(x)\mathrm{d}\mu(x)
\]
for any $f$-invariant measure $\mu$, where $m_i(x)=\mathrm{dim}E_i(x)$. Moreover, if $f$ is $C^2$ and $\mu$ is equivalent to the Riemannian measure on $M$, Pesin \cite{Pesin1977} proved that the equality (which is called Pesin's entropy formula) holds in the above inequality.

In 1981 Ma\~n\'e \cite{Mane1981} gave an ingenious approach to prove Pesin's entropy formula under the assumption that $f$ is $C^{1+\alpha}$($\alpha>0$) and $\mu$ is absolutely continuous with respect to the Lebesgue measure. In 1985, Ledrappier and Young \cite{LedrappierYoung1985} proved that Pesin's entropy formula holds for $C^2$ diffeomorphisms if and only if $\mu$ is an SRB measure. Furthermore, in \cite{LedrappierYoung1985a} they gave a more general formula which is called the dimension formula for any $f$-invariant measure $\mu$ as follows
\[
h_{\mu}(f)=\int_{M}\sum_{\lambda_i(x)>0}\lambda_i(x)\gamma_i(x)\mathrm{d}\mu(x),
\]
where $\gamma_i(x)$ denotes the dimension of $\mu$ in the direction of the subspace $E_i(x)$. In their argument, they used the notion of `` entropy along unstable foliations '', which reflects the complexity of the system at different levels.

Except for Ruelle's inequality, all other results above require that $f$ is $C^{1+\alpha}$ or $C^2$, so it is interesting to investigate Pesin's formula under $C^1$ differentiability hypothesis plus some additional conditions, for example, dominated splitting. Recently, Sun and Tian \cite{SunTian2010} applied Ma\~n\'e's method to prove that Pesin's entropy formula holds if $f$ is a $C^1$ diffeomorphism with dominated splitting. In \cite{CatsigerasCerminaraEnrich2015}, Catsigeras, Cerminara, and Enrich considered a nonempty set of invariant measures which describe the asymptotic statistics of Lebesgue almost all orbits, and they proved that the measure-theoretic entropy of each of these measures is bounded from below by the sum of the Lyapunov exponents on the dominating subbundle. For more details about the dynamics of a system with dominated splitting, one can refer to  \cite{Pujals2007} and  \cite{Sambarino2014}. Instead of the condition in \cite{SunTian2010} that $f$ admits a dominated splitting, Tian \cite{Tian2014} gave the concept of nonuniformly-H\"older-continuity for an $f$-invariant measure, and proved that Pesin's entropy formula holds under that assumption.

An interesting question is: can we get the formula of entropy along unstable foliations for a $C^1$ diffeomorphism with dominated splitting? In this paper, we give a positive answer of this question. The analogues of Ruelle inequality and Pesin's entropy formula are given. In the proofs, we borrow some idea from Hu, Hua and Wu's paper \cite{Wu2015} in which a variational principle relating the topological entropy and measure-theoretic entropy on the unstable foliation of a partially hyperbolic diffeomorphism is obtained.

This paper is organized as follows. In section \ref{MainResults}, we give some preliminaries and the statement of our main results, and the proofs of the main results are given in the next two sections.

\section{Preliminaries and Statement of Results}\label{MainResults}
Throughout this paper, Let $M$ be a compact Riemannian manifold without boundary, $f$ a $C^1$ diffeomorphism on $M$, and $\mu$ an $f$-invariant Borel measure.

Let $\Gamma$ be the set of points which are regular in the sense of Oseledec \cite{Oseledec1968}. For $x\in\Gamma$, let
\[
\lambda_1(x)>\lambda_2(x)>\dots>\lambda_{r(x)}(x)
\]
denote its distinct Lyapunov exponents and let
\[
T_xM=E_1(x)\oplus\dots\oplus E_{r(x)}(x)
\]
be the corresponding decomposition of its tangent space.

Now we give the definition of dominated splitting. Denote the minimal norm of an invertible linear map $A$ by $m(A)=\Vert A^{-1}\Vert^{-1}$.

\begin{definition}
\begin{enumerate}[fullwidth,label=(\arabic*)]
\item(Dominated splitting at one point) Let $x \in M$ and $T_{x}M=E(x) \oplus F(x)$ be a $Df$-invariant splitting on $orb(x)$. $T_{x}M=E(x) \oplus F(x)$ is called to \emph{be $(N(x),i(x))$-dominated splitting at $x$}, if the dimension of $F$ is $i(x)$($1\leq i(x)\leq\dim M-1$) and there exists a constant $N(x) \in{\mathbbm Z}^+$ such that
\[
  \frac{\Vert Df^{N(x)}|_{E(f^j(x))}\Vert}{m(Df^{N(x)}|_{F(f^j(x))})}\leq\frac{1}{2},\forall j\in\mathbbm Z.
\]

\itemindent=1.25em\item(Dominated splitting on an invariant set) Let $\Delta$ be an $f$-invariant set and $T_{\Delta}M=E\oplus F$ be a $Df$-invariant splitting on $\Delta$. We call $T_{\Delta}M=E\oplus F$ to \emph{be $(N,i(y))$-dominated splitting}, if the dimension of F at $y$ is $i(y)$($1\leq i(y)\leq\dim M-1$) and there exists a constant $N\in\mathbbm Z^+$ such that
\[
  \frac{\Vert Df^{N}|_{E(y)}\Vert}{m(Df^{N}|_{F(y)})}\leq\frac{1}{2},\forall y\in\Delta.
\]
\end{enumerate}
\end{definition}

In the following, we consider two cases of the invariant measure $\mu$.
\begin{enumerate}[fullwidth,listparindent=1.25em,label=\textbf{Case \arabic*}]
\item $\mu$ is ergodic. In this case, the functions $x\mapsto r(x)$, $\lambda_i(x)$ and $\dim E_i(x)$ are constant $\mu$-a.e., denote them by $r$, $\lambda_i$ and $m_i$ respectively. Let $u=\max\{i\colon\lambda_i>0\}$, $u(i)=u-i+1$, and $\delta_*=\min\{\lambda_{i+1}-\lambda_i,1\leq i\leq r-1\}$.
For $1\leq i\leq u$, $x \in\Gamma$ and $0<\varepsilon<\delta_*$, we define
\begin{equation}\label{manifold}
W^i(x)=\{y \in M\colon\underset{n\to\infty}{\limsup}\frac{1}{n}\log d(f^{-n}x,f^{-n}y)\leq-\lambda_{u(i)}+\varepsilon\},
\end{equation}
where $d$ is the Riemannian metric on $M$. The following result ensures that $W^i(x)$ is an immersed $C^1$-manifold under the assumption of dominated splitting.

\begin{proposition}[{\cite[Proposition 8.9]{AbdenurBonattiCrovisier2011}}]\label{stablemanifold}
Let $\mu$ be an ergodic measure whose support admits a dominated splitting $E\oplus F$, let $\lambda_E^+<\lambda_F^-$ be the maximal Lyapunov exponent in $E$ and the minimal Lyapunov exponent in $F$ of the measure $\mu$.

If $\lambda_E^+$ is strictly negative, then at $\mu$-a.e. point $x\in M$, there exists an injectively immersed $C^1$-manifold $W^E(x)$ with $\dim W^E(x)=\dim E$, tangent to $E_x$, which is a stable manifold, and for any $\lambda\leq0$ contained in $(\lambda_E^+,\lambda_F^-)$ and $\mu$-a.e. point $x$ we have
\[
W^E(x)=\{y\in M\colon d(f^nx,f^ny)e^{-\lambda n}\to0, \text{as }n\to\infty\}.
\]
\end{proposition}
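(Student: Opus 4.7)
The plan is to construct $W^E(x)$ by a Hadamard--Perron graph transform tailored to dominated splittings, so that the $C^1$ conclusion comes from domination rather than from Hölder regularity of $Df$. First I would pass to an adapted Riemannian metric in which the $N$-step domination becomes a one-step domination $\|Df|_{E(y)}\| \leq \theta \cdot m(Df|_{F(y)})$ for a single $\theta<1$ at every $y$ in the support of $\mu$, and in which $\|Df|_{E(x)}\| \leq e^{\lambda_E^+ + \varepsilon}$ along a full-measure invariant set of regular points. Since $\lambda_E^+<0$, choosing $\varepsilon$ small turns this into genuine contraction in the $E$-direction.

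Next, in a local chart at each regular point $x$, I would consider the space of Lipschitz graphs over $E_x$ whose Lipschitz constant is bounded by the opening of a narrow invariant cone around $E$, and apply the graph transform associated with $f^{-1}$. The one-step domination makes this transform a uniform contraction in the $C^0$ metric on graphs, because any vector tangent to a candidate graph sees its $F$-component expanded relative to its $E$-component by a factor at least $\theta^{-1}$ per iterate. The unique fixed point furnishes a local stable manifold $W^E_{\mathrm{loc}}(x)$, Lipschitz and tangent to $E_x$ at $x$; iterating $W^E(x)=\bigcup_{n\geq 0} f^{-n}\!\bigl(W^E_{\mathrm{loc}}(f^n x)\bigr)$ yields a global injectively immersed submanifold of dimension $\dim E$.

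Upgrading Lipschitz regularity to $C^1$ is, I expect, the main obstacle: without $C^{1+\alpha}$ smoothness of $Df$ the standard Pesin argument via Hölder estimates on holonomies is unavailable, so the domination must carry the entire burden. The decisive point is that the invariant cone field around $E$ is contracted at a uniform exponential rate by the backward cocycle $Df^{-n}$, again directly from the one-step domination. Consequently, at every point of $W^E_{\mathrm{loc}}(x)$ the intersection of the backward-iterated cones collapses to a single linear subspace that varies continuously with the base point, which gives $C^1$ regularity of the manifold and identifies its tangent bundle with $E$.

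Finally I would verify the dynamical characterization $W^E(x)=\{y:d(f^n x,f^n y)e^{-\lambda n}\to 0\}$ for $\lambda\in(\lambda_E^+,\lambda_F^-)$ with $\lambda\leq 0$. The inclusion $\subseteq$ follows from the contraction rate along $W^E(x)$, since $d(f^n x,f^n y)\leq C_x e^{(\lambda_E^+ + \varepsilon) n}$ and $\lambda_E^+ + \varepsilon < \lambda$. For the converse, suppose $y\notin W^E(x)$; then in adapted coordinates at $f^n x$ the displacement of $f^n y$ has a nonzero $F$-component, and domination together with $\lambda_F^-$ being the minimal exponent in $F$ force this component to grow at asymptotic rate at least $\lambda_F^-$, which exceeds $\lambda$ and contradicts $d(f^n x,f^n y)e^{-\lambda n}\to 0$.
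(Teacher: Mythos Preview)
The paper does not prove this proposition at all: it is quoted verbatim as Proposition~8.9 of Abdenur--Bonatti--Crovisier \cite{AbdenurBonattiCrovisier2011} and used as a black box, so there is no ``paper's own proof'' to compare against. Your sketch is therefore not competing with anything in the present paper.

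As a standalone outline your plan is broadly the right one, but it conflates two layers that the cited reference keeps separate. The existence and $C^1$ regularity of locally invariant plaques tangent to $E$ follow from the \emph{uniform} domination on $\mathrm{supp}\,\mu$ alone, via the Hirsch--Pugh--Shub plaque-family theorem; no Lyapunov information is needed there, and this is what delivers $C^1$ without any H\"older hypothesis on $Df$. The second layer is \emph{non-uniform}: showing that these plaques are genuinely stable (i.e.\ that orbits in them contract at rate $\lambda_E^+$) uses Oseledec and a Pliss-type argument to find, for $\mu$-a.e.\ $x$, infinitely many hyperbolic times at which the plaque shrinks by a definite factor. Your sentence ``in which $\|Df|_{E(x)}\|\le e^{\lambda_E^++\varepsilon}$ along a full-measure invariant set'' tries to make this second layer uniform via an adapted metric, and that is where the sketch is loose: no adapted metric turns an asymptotic Lyapunov bound into a pointwise one-step bound on a full-measure set; at best you get a tempered (slowly varying) function $C(x)$ in front, and then the size of $W^E_{\mathrm{loc}}(x)$ must be allowed to vary measurably with $x$. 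If you rewrite the argument so that the graph transform and the $C^1$ upgrade rely only on domination, and the contraction estimate $d(f^nx,f^ny)\le C(x)e^{(\lambda_E^++\varepsilon)n}$ is obtained afterwards from Oseledec (with $C(x)$ measurable, not constant), the outline becomes correct and matches the strategy in \cite{AbdenurBonattiCrovisier2011}.
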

For $x\in\Gamma$ and $1\leq i\leq u$, let
\[
E(x)=\underset{u(i)+1\leq j\leq r}{\oplus} E_j(x) \quad\text{and}\quad F(x)=\underset{1\leq j\leq u(i)}{\oplus}E_j(x).
\]
\begin{assumption}[\textbf{ergodic case}]\label{ergodic_case}
  For each $1\leq i\leq u$, $T_{\Gamma}M=E\oplus F$ is $(N,I(i))$-dominated splitting for some $N\in \mathbb{Z}^+$, where $I(i)=\sum_{1\leq j\leq u(i)}m_j$.
\end{assumption}

\begin{remark}
It is obvious that any hyperbolic automorphism on two dimensional tori satisfies Assumption \ref{ergodic_case}.
\end{remark}

Under Assumption \ref{ergodic_case}, we know that, by Proposition \ref{stablemanifold}, $W^i(x)$ is a $C^1$ $I(i)$-dimensional immersed submanifold of $M$ tangent at $x$ to $F(x)$ by replacing $f$ with $f^{-1}$. It is called \emph{the $i$th unstable manifold} of $f$ at $x$. $\{W^i(x)\colon x\in\Gamma\}$ is called \emph{the $W^i$-foliation}.

A measurable partition $\xi^i$ of $M$ is said to \emph{be subordinate to} $W^i$ if for $\mu$ -a.e. $x \in \Gamma$, $\xi^i(x)\subset W^i(x)$ and contains an open neighborhood of $x$ in $W^i(x)$. An important property with respect to such a partition is that there is a canonical system of conditional measures $\{\mu_x^{\xi^i}\}$. The following lemma ensures the existence of such partitions.
\begin{lemma}\label{specialpartition}
Let $\mu$ be an ergodic measure, then there exists a measurable partition $\xi^i$ of $M$ satisfying the following properties:
\begin{enumerate}[label=(\arabic*)]
  \item $\xi^i$ is a partition subordinate to $W^i$;
  \item $\xi^i$ is increasing, i.e., $f^{-1}\xi^i\geq\xi^i$;
  \item $\overset{\infty}{\underset{n=1}{\bigvee}}f^{-n}\xi^i=\epsilon$, where $\epsilon$ is the partition of $M$ into points;
  \item $\overset{\infty}{\underset{n=1}{\bigwedge}}f^{n}\xi^i=\mathscr H(\Pi^+)$, where $\Pi^+$ is the partition of $M$ into global $i$th unstable manifolds, and $\mathscr H(\Pi^+)$ is the measurable hull of $\Pi^+$.
\end{enumerate}
\end{lemma}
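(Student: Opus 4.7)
The plan is to carry out the Sinai--Ledrappier--Strelcyn construction of measurable partitions subordinate to an unstable foliation, now inside the dominated-splitting framework of Assumption \ref{ergodic_case}. Since $\mu$ is ergodic and Proposition \ref{stablemanifold} (applied to $f^{-1}$ with the roles of $E$ and $F$ reversed) yields immersed $I(i)$-dimensional manifolds $W^i(x)$ tangent to $F(x)$, a Lusin-type argument first produces a compact set $K\subset\Gamma$ of positive $\mu$-measure on which the local plaques $W^i_{\mathrm{loc}}(x)$ have a uniform size $\delta>0$ and depend continuously on $x$. I would pick a density point $p\in K$ and a ball $B=B(p,\rho)$ with $\rho\ll\delta$ so that the plaques through $K\cap B$ are pairwise disjoint and cross $B$ fully. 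Next I would form an initial partition $\eta_0$ whose atoms inside $B$ are the connected components of $W^i_{\mathrm{loc}}(z)\cap B$ for $z\in K\cap B$, together with fine Borel cells covering the residual piece in $B$, and whose atoms outside $B$ are of diameter less than a prescribed $\varepsilon$. The candidate for the lemma is
\[
\xi^i := \bigvee_{n=0}^{\infty} f^n\eta_0.
\]

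Property (2) is immediate: $f^{-1}\xi^i = \bigvee_{n\ge -1}f^n\eta_0 = f^{-1}\eta_0\vee\xi^i \ge \xi^i$. For (1), Poincar\'e recurrence applied to $f^{-1}$ shows that $\mu$-a.e.\ $x$ returns in backward time to $K\cap B$ at times $n_k\uparrow\infty$; at each such return $\eta_0(f^{-n_k}x)$ is a local $W^i$-plaque, so $f^{n_k}\eta_0(x)\subset W^i(x)$ and therefore $\xi^i(x)\subset W^i(x)$. The open-neighborhood clause then follows by balancing the uniform plaque size $\delta$ on $K$ against the exponential contraction of $f^{-n}|_{W^i}$ coming from Assumption \ref{ergodic_case}. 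For (3), one rewrites $\bigvee_{n\ge 1}f^{-n}\xi^i = \bigvee_{k\in\mathbb Z}f^k\eta_0$ and uses that, by the small-atom choice outside $B$ and the transverse plaque structure inside $B$, the two-sided $\eta_0$-itinerary determines the point. For (4), the inclusion $\xi^i(x)\subset W^i(x)$ gives $f^n\xi^i\ge\Pi^+$ for all $n$, so $\bigwedge_n f^n\xi^i\ge\mathscr H(\Pi^+)$; conversely the atoms $f^n\xi^i(x)$ are nested pieces of $W^i(x)$ that exhaust the full leaf as $n\to\infty$ because $f$ expands along $W^i$, forcing every $\bigwedge_n f^n\xi^i$-measurable set to be a union of global unstable manifolds, hence $\mathscr H(\Pi^+)$-measurable.

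The main obstacle is the open-neighborhood clause in (1). In the standard $C^{1+\alpha}$ setting this is handled through Lyapunov charts and tempered estimates; here I have to derive uniform contraction bounds along $W^i$ directly from the $(N,I(i))$-dominated splitting in Assumption \ref{ergodic_case}, combined with Birkhoff averages of $\log\|Df^N|_E\|$ and $\log m(Df^N|_F)$, and then show that the plaque size at the return time $n_k$ pulls back to a definite $W^i$-neighborhood of $x$. In other words, the cumulative contraction along past returns must be tempered enough to leave a uniform neighborhood at $x$, and it is precisely the dominated-splitting hypothesis, rather than mere pointwise Oseledec exponents, that makes this step go through.
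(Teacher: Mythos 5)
Your construction is precisely the Sinai--Ledrappier--Strelcyn/Pesin--Sinai scheme, which is exactly what the paper invokes: the paper does not prove Lemma~\ref{specialpartition} at all, it simply refers the reader to \cite{PesinSinai1982}. Your sketch (Lusin set of uniform plaque size, density point, initial partition $\eta_0$, $\xi^i=\bigvee_{n\ge0}f^n\eta_0$, Poincar\'e recurrence for the ``$\subset W^i$'' clause, generation for (3), leaf exhaustion for (4)) is correct in outline and is the same argument as the cited source, and you correctly single out the one point needing care in the $C^1$ setting, namely that the open-neighborhood clause in~(1) uses a uniform backward contraction estimate along $W^i$ on a good measure set, supplied here by the dominated splitting of Assumption~\ref{ergodic_case} rather than by $C^{1+\alpha}$ Lyapunov-chart tempering.
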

\begin{proof}
For the proof, the reader can refer to \cite{PesinSinai1982}.
\end{proof}

For more details about measurable partitions and conditional measures the reader can refer to Section 0.1 -- 0.3 in \cite{LiuQian1995} and Section 3 and 4 in \cite{Rokhlin1962}.

Let $\xi^i$ be a measurable partition subordinate to $W^i$ with conditional measures $\{\mu_x^{\xi^i}\}$. Define $h^i_{\mu}(f,x)\colon \Gamma \to \mathbbm R$ by
\begin{align*}
  h^i_{\mu}(f,x) &=h^i_{\mu}(f,x,\xi^i) \\
  {} &= h_i(f,x,\xi^i,\{\mu_x^{\xi^i}\}) \\
  {} &= \lim_{\varepsilon\to 0}\liminf_{n\to\infty}-\frac{1}{n}\log\mu_x^{\xi^i}V^i(f,x,n,\varepsilon) \\
  {} &= \lim_{\varepsilon\to 0}\limsup_{n\to\infty}-\frac{1}{n}\log\mu_x^{\xi^i}V^i(f,x,n,\varepsilon),
\end{align*}
where $V^i(f,x,n,\varepsilon)\colon =\{y \in W^i(x)\colon d^i(f^k(y),f^k(x))<\varepsilon,0\leq k\leq n-1\}$, and $d^i$ is the metric on $W^i(x)$ given by the Riemannian structure inherited from $M$.

$h_{\mu}^i(f,x)$ is well defined and is independent of the choice of $\xi^i$ or $\mu^{\xi^i}_x$, and it is easy to verify that $h_{\mu}^i(f,x)=h_{\mu}^i(f,fx)$ $\mu$-a.e.(cf.\cite{LedrappierYoung1985a}). Since $\mu$ is ergodic, $h_{\mu}^i(f,x)$ is constant $\mu$-a.e.
\begin{definition}\label{entropydef}
We define \emph{the entropy of $f$ along $i$th unstable foliation} by
\[
h_{\mu}^i(f)=\int_{M}h^i_{\mu}(f,x)\mathrm{d}\mu(x).
\]
\label{entropy}

Since $\mu$ is ergodic, we know that $h_{\mu}^i(f,x)=h_{\mu}^i(f)$, for $\mu$-a.e.$x\in M$.
\end{definition}

\item $\mu$ is arbitrary. In this case, the functions $x\mapsto r(x)$, $\lambda_i(x)$ and $\dim E_i(x)$ are now measurable. Let $u(x)=\max\{i\colon\lambda_i(x)>0\}$, $u(i,x)=u(x)-i+1$, and $\Gamma_i=\{x\in\Gamma\colon u(i,x)>0\}$. Then we can define $W^{u(i,x)}(x)$ as in (\ref{manifold}) except that $u(i)$ and $\lambda_{u(i)}$ should be replaced by $u(i,x)$ and $\lambda_{u(i,x)}(x)$ respectively, and the choice of $\varepsilon$ depends on $x$ such that $\varepsilon<\lambda_{u(i,x)}(x)-\lambda_{u(i,x)+1}(x)$.

For $x\in\Gamma$ and $1\leq i\leq u(x)$, let
\[
E(x)=\underset{u(i,x)+1\leq j\leq r(x)}{\oplus}E_j(x) \quad\text{and}\quad F(x)=\underset{1\leq j\leq u(i,x)}{\oplus}E_j(x).
\]
\begin{assumption}[\textbf{general case}]\label{general_case}
For each $x\in\Gamma$ and $1\leq i\leq u(x)$, $T_{orb(x)}M=E\oplus F$ is $(N(i,x),I(i,x))$-dominated splitting, where $N(i,\cdot)\colon \Gamma_i\to \mathbb{Z}^+$ is a measurable function and $I(i,x)=\sum_{1\leq j\leq u(i,x)}m_j(x)$.
\end{assumption}

Similar to that in Proposition \ref{stablemanifold}, under Assumption \ref{general_case}, $W^i(x)$ is a $C^1$ $I(i,x)$-dominated immersed submanifold of $M$ tangent to $F(x)$ at $x$. It is also called \emph{the $i$th unstable manifold of $f$ at $x$}. $\{ W^i(x)\colon x\in\Gamma_i\}$ is called \emph{the $W^i$-foliation}.

A measurable partition $\xi^i$ of $M$ is said to \emph{be subordinate to $W^i$ on $\Gamma_i$}, if for $\mu$-a.e. $x \in\Gamma_i$, $\xi^i(x)\subset W^i(x)$ and contains an open neighborhood of $x$ in $W^i(x)$. For the existence of such $\xi^i$, one can simply disintegrate $\mu$ into its ergodic components and note that the entire leaf $W^i(x)$ is contained in the ergodic component of $x$(cf. \cite{LedrappierYoung1985a}). There is a canonical system of conditional measures $\{ \mu^i_x\}$ as $\mu$ is ergodic. Then for $x\in\Gamma_i$, we can define $h^i_{\mu}(f,x)\colon \Gamma_i\to \mathbbm R$ as $\mu$ is ergodic. And we define \emph{the entropy along $W^i$ on $\Gamma_i$} which is still denoted by $h^i_\mu(f)$ as in Definition \ref{entropydef} except that $M$ now should be replaced by $\Gamma_i$.
\end{enumerate}
\begin{remark}
It is easy to check that when $\mu$ is ergodic, $\Gamma_i=\Gamma$ for $1\leq i\leq u$. So when $\mu$ is ergodic, the entropy along $i$th unstable foliation on $\Gamma_i$ coincides with the entropy along $i$th unstable foliation. So we call the entropy defined as above \emph{the entropy of $f$ along $i$th unstable foliation}.
\end{remark}
\textbf{Standing hypotheses for the remaining of this paper:} When $\mu$ is ergodic, we set Assumption \ref{ergodic_case}, and when $\mu$ is arbitrary, we set Assumption \ref{general_case}.

Now we are ready to state our main results of this paper:
\begin{theoremm}\label{Ruelle}
  Let $\mu$ be an invariant measure. Then we have the following inequality
  \[
  h^i_{\mu}(f)\leq\int_{\Gamma_i}\sum_{j\leq u(i,x)}m_j(x)\lambda_j(x)\mathrm{d}\mu(x).
  \]

In particular, if $\mu$ is ergodic, then $h_{\mu}^i(f,x)$, $\lambda_i(x)$, $m_i(x)$ and $u(i,x)$ are constant, then we have
  \[
  h_{\mu}^i(f)\leq\sum_{j\leq u(i)}m_j\lambda_j.
  \]
\end{theoremm}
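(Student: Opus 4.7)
The plan is to first reduce to the ergodic case by disintegrating $\mu$ into its ergodic components. Both sides of the claimed inequality are affine in the measure, and the dominated-splitting hypothesis in Assumption \ref{general_case} descends to a.e. ergodic component, so once the ergodic inequality is established the general one follows routinely (this also handles the measurability bookkeeping associated with $\Gamma_i$, $u(i,x)$ and $I(i,x)$). For ergodic $\mu$ I would fix the increasing measurable partition $\xi^i$ subordinate to $W^i$ furnished by Lemma \ref{specialpartition} and work throughout with its conditional measures $\{\mu_x^{\xi^i}\}$.

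Since $f^{-1}\xi^i\ge\xi^i$, a Rokhlin-type identity for increasing partitions expresses the leaf entropy as the conditional information
\[
h^i_\mu(f)=H_\mu\bigl(f^{-1}\xi^i\,\big|\,\xi^i\bigr)=\int_M-\log\mu_x^{\xi^i}\!\bigl((f^{-1}\xi^i)(x)\bigr)\,d\mu(x).
\]
The agreement of this formula with the Bowen-ball definition in Definition \ref{entropydef} can be checked by a Shannon--McMillan argument on a single leaf, using property (3) of Lemma \ref{specialpartition} to see that $\bigvee_k f^{-k}\xi^i$ is the point partition, in the spirit of Ledrappier--Young. Averaging the integrand over a fixed atom $\xi^i(x)$ with respect to $\mu_x^{\xi^i}$ and applying the elementary bound $\sum p_j(-\log p_j)\le\log N$ gives
\[
\int_{\xi^i(x)}-\log\mu_x^{\xi^i}\!\bigl((f^{-1}\xi^i)(y)\bigr)\,d\mu_x^{\xi^i}(y)\le\log N(x),
\]
where $N(x)$ is the number of $f^{-1}\xi^i$-sub-atoms of $\xi^i(x)$. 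Mapping these sub-atoms forward by $f$ identifies $N(x)$ with the number of $\xi^i$-atoms inside $f(\xi^i(x))\subset W^i(fx)$, which a leaf-volume comparison bounds by a uniform constant times the Jacobian $|\det(Df|_{F(x)})|$. Integrating against $\mu$, Birkhoff together with Oseledec then yields
\[
\int_M\log|\det(Df|_{F(x)})|\,d\mu(x)=\sum_{j\le u(i)}m_j\lambda_j,
\]
which combined with the previous inequality produces the claimed bound.

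The main obstacle is passing from the combinatorial count $N(x)$ to the geometric Jacobian $|\det(Df|_{F(x)})|$ under only $C^1$ regularity. Without a H\"older exponent one has no bounded-distortion estimates for $f^n|_{W^i}$, so the comparison cannot be done leafwise by iteration in the usual way. Assumption \ref{ergodic_case} is exactly what rescues the argument: the \emph{uniform} dominated splitting with fixed $N$ allows one to construct Pesin--Lyapunov charts on $W^i$ in which $Df$ acts, up to arbitrarily small error $\varepsilon$, with expansions governed by the Lyapunov exponents, and in which the $\xi^i$-atoms have comparable geometry on a Lusin set of large measure. Within such charts the leaf-volume-to-atom-count comparison can be implemented using only the uniform continuity of $Df$, which is the technical mechanism borrowed from Hu--Hua--Wu \cite{Wu2015}. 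Sending $\varepsilon\to 0$ and then taking a countable union of Lusin sets completes the proof; the special-case statement for ergodic $\mu$ in the theorem is then immediate since $r,\lambda_j,m_j,u(i,x)$ are all constant $\mu$-a.e.
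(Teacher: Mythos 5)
Your route is genuinely different from the paper's. The paper works with the Bowen--ball definition of $h^i_\mu(f)$ directly: it introduces $(n,\varepsilon)$ $i$--separated and $i$--spanning sets on $\overline{W^i(x,\delta)}$, shows (Lemma~\ref{SN}) that their exponential growth rates agree, shows (Lemma~\ref{ergodic}) that for a generic $x$ the unstable entropy is bounded by the spanning--set growth rate, and then bounds the separated--set count by a volume ratio $\tilde V_{x,\delta}/\mathrm{Vol}(\exp^{-1}V^i(f,y_n,n,\varepsilon/2))$, estimating the denominator via an $n$--fold pull--back through exponential charts whose Jacobians are controlled by the dominated splitting (a factor $\tfrac12$ per step) and by Oseledec regularity sets $R_{m,\varepsilon'}$. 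This produces the bound $\sum_j m_j\lambda_j+\log 2+O(\varepsilon')$, and the spurious $\log 2$ is then removed by the power rule applied to $g=f^N$. You instead start from the Rokhlin/Ledrappier--Young identity $h^i_\mu(f)=H_\mu(f^{-1}\xi^i\mid\xi^i)$, bound the conditional information by the logarithm of the number of $f^{-1}\xi^i$--sub--atoms in $\xi^i(x)$, and try to compare that count to the one--step Jacobian $|\det(Df|_{F(x)})|$.

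There is a real gap in that conversion. The inequality $N(x)\le C\,|\det(Df|_{F(x)})|$ is not true as stated: to bound the number of $\xi^i$--atoms inside $f(\xi^i(x))$ by a Jacobian you need both an upper bound on $\mathrm{vol}(\xi^i(x))$ and a \emph{lower} bound on the leaf volume of the atoms being counted, and the best one can get from a Lusin--set construction is
\[
N(x)\le \frac{v_2}{v_1}\,|\det(Df|_{F(x)})|,
\]
with $v_1\le v_2$ the extreme atom volumes on the Lusin set. Since your starting identity is a \emph{one--step} formula, this multiplicative constant survives the $\mu$--integration and contaminates the final bound with an additive $\log(v_2/v_1)$ that has no reason to be small. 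The paper's argument also accrues such an error (its $\log 2$ from the distortion estimate), but crucially it is a \emph{per--iterate} error in an $n$--step Bowen--ball volume estimate, so it shows up as $\tfrac1n\cdot n\log 2=\log 2$ and can then be shrunk by the power rule $h^i_\mu(f)=\tfrac1N h^i_\mu(f^N)$. Your one--step formulation would have to be upgraded to an $n$--step version $\tfrac1n H_\mu(f^{-n}\xi^i\mid\xi^i)$ with an $n$--fold volume--to--count comparison, plus the power--rule step at the end, before it closes; as written, the final inequality is off by the unremovable $\log(v_2/v_1)$. You should also be wary of invoking ``Pesin--Lyapunov charts on $W^i$'' in the $C^1$ setting: the paper deliberately avoids these and works only with the exponential map, the uniform continuity of $Df$, the graph transform of Lemma~\ref{graph}, and the Oseledec regularity sets, since genuine Lyapunov charts are not available without H\"older regularity of $Df$.
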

Moreover, if $\mu$ satisfies some additional conditions, we have the following theorem.
\begin{theoremm}\label{mainresult}
  Let $\mu$ be an invariant measure satisfying that for $\mu$-a.e. $x\in M$ and every measurable partition $\xi^i$ subordinate to $W^i$, $\mu^{\xi^i}_x\ll \lambda^i_x$, where $\lambda_x^i$ is the corresponding Riemannian measure on $W^i(x)$. Then we have the following entropy formula
  \[
  h^i_{\mu}(f)=\int_{\Gamma_i}\sum_{j\leq u(i,x)}m_j(x)\lambda_j(x)\mathrm{d}\mu(x).
  \]

In particular, if $\mu$ is ergodic, then we have
  \[
  h_{\mu}^i(f)=\sum_{j\leq u(i)}m_j\lambda_j.
  \]
\end{theoremm}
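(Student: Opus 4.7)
The plan is to establish the nontrivial $\ge$ direction (Theorem A already gives $\le$) by adapting Mañé's proof of Pesin's entropy formula to the foliated setting, combining the $C^1$-with-dominated-splitting method of Sun and Tian \cite{SunTian2010} with the partition-along-unstable techniques of Hu, Hua and Wu \cite{Wu2015}.

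First I would reduce to the ergodic case. By the ergodic decomposition theorem, write $\mu = \int \mu_\omega \, d\tau(\omega)$ with each $\mu_\omega$ ergodic; on $\mu_\omega$-typical points the functions $r(\cdot),\lambda_j(\cdot),m_j(\cdot),u(i,\cdot)$ are constant, so Assumption \ref{general_case} restricts to Assumption \ref{ergodic_case} on $\mathrm{supp}\,\mu_\omega$. Both the entropy $h^i_\mu(f)$ (via a standard additivity for conditional entropy) and the integral of exponents decompose additively over $\tau$, and the absolute continuity hypothesis on $\mu^{\xi^i}_x$ descends to the ergodic components. It therefore suffices to prove $h^i_\mu(f) \ge \sum_{j\le u(i)} m_j \lambda_j$ under Assumption \ref{ergodic_case}.

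Next, fix an increasing measurable partition $\xi^i$ subordinate to $W^i$ as furnished by Lemma \ref{specialpartition}. Using properties (2)--(4) of that lemma and the standard Rokhlin-type formula for entropy along a sub-$\sigma$-algebra, I would first establish
\[
h^i_\mu(f) \;=\; H_\mu\bigl(f^{-1}\xi^i \mid \xi^i\bigr) \;=\; -\int \log \mu^{\xi^i}_x\bigl((f^{-1}\xi^i)(x)\bigr)\, d\mu(x).
\]
Since $\xi^i$ is increasing, $(f^{-1}\xi^i)(x) = f^{-1}(\xi^i(fx)) \subset \xi^i(x)$, and $f$ maps this set diffeomorphically onto $\xi^i(fx)$ with leafwise Jacobian $|\det Df|_{F(\cdot)}|$ since $W^i$ is tangent to $F$. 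Writing $\mu^{\xi^i}_x = \rho_x \lambda^i_x$ and changing variables on the leaf give
\[
\mu^{\xi^i}_x\bigl((f^{-1}\xi^i)(x)\bigr) \;=\; \int_{\xi^i(fx)} \rho_x(f^{-1}z)\,\bigl|\det Df|_{F(f^{-1}z)}\bigr|^{-1}\, d\lambda^i_{fx}(z).
\]
The $f$-invariance of $\mu$ combined with Rokhlin disintegration yields a cocycle identity relating $\rho_x \circ f^{-1}$ on $\xi^i(fx)$ to $\rho_{fx}$, up to a normalizing ratio of $\mu$-masses of neighboring atoms. Taking $-\log$, integrating against $\mu$, and exploiting $f$-invariance to cancel the density terms (as in \cite{Mane1981}) then leaves
\[
h^i_\mu(f) \;\ge\; \int \log \bigl|\det Df|_{F(x)}\bigr|\, d\mu(x) \;=\; \sum_{j \le u(i)} m_j \lambda_j
\]
by Oseledec applied to the top $u(i)$ subspaces.

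The main obstacle is the absence of $C^{1+\alpha}$ regularity: the densities $\rho_x$ are only measurable, the leaves $W^i(x)$ depend merely measurably on $x$, and leafwise absolute continuity of holonomies is not automatic. The dominated splitting in Assumption \ref{ergodic_case} is the essential input: it makes the splitting $E\oplus F$ (and hence $|\det Df|_F|$) continuous on $\mathrm{supp}\,\mu$ and, via Proposition \ref{stablemanifold} applied to $f^{-1}$, makes the leaves $W^i(x)$ genuinely $C^1$. These facts, together with the hypothesis $\mu^{\xi^i}_x \ll \lambda^i_x$, are exactly what is needed to validate the change of variables and push the Mañé cancellation argument through in the $C^1$ foliated setting, as in \cite{SunTian2010,Wu2015}.
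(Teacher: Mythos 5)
Your proposal takes a genuinely different route from the paper's. You propose to use the Rokhlin-type formula $h^i_\mu(f) = H_\mu(f^{-1}\xi^i \,|\, \xi^i)$, expand $\mu^{\xi^i}_x\bigl((f^{-1}\xi^i)(x)\bigr)$ via a leafwise change of variables with Jacobian $|\det Df|_{F}|$, extract a cocycle identity for the densities $\rho_x = d\mu^{\xi^i}_x/d\lambda^i_x$, and cancel the density terms by $f$-invariance. That is the Ledrappier--Young (Part I) mechanism, not Ma\~n\'e's as you write --- Ma\~n\'e's argument is precisely the Bowen-ball volume estimate, which is what the paper actually uses. Concretely, the paper proves the $\geq$ direction by combining Proposition \ref{equivalence} with a Radon--Nikodym/martingale convergence step and Fatou's lemma to reduce to the purely geometric estimate $\liminf_n \frac{1}{n}\bigl(-\log\lambda^i_x(V^i(f,x,n,\varepsilon))\bigr) \geq \sum_{j\leq u(i)} m_j\lambda_j - O(\varepsilon)$, and then establishes that estimate via the Sun--Tian graph-transform lemma on dominated bundles after passing to a high iterate $g=f^N$ on which the splitting is $(1,i(x))$-dominated on a near-full-measure set.

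Your route can likely be pushed through, but your sketch suppresses two delicate points that the paper's chosen route is specifically designed to avoid. First, the cancellation $\int \log\rho_{fx}(fx)\,d\mu = \int \log\rho_x(x)\,d\mu$ by invariance requires either $\log\rho_x(x) \in L^1(\mu)$ --- which is not supplied in the $C^1$-with-dominated-splitting setting (in the $C^{1+\alpha}$ case it comes from the explicit infinite-product formula for the density, whose convergence is exactly where H\"older continuity of $\log|\det Df|_F|$ along leaves enters) --- or a coboundary/Poincar\'e-recurrence argument using ergodicity and the a.e.-finiteness of $\rho_x(x)$ together with $L^1$-ness of the displacement; you state neither. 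Second, the cocycle identity $\rho_x(f^{-1}z)\,|\det Df|_{F(f^{-1}z)}|^{-1} = q(x)\,\rho_{fx}(z)$ holds for $\lambda^i_{fx}$-a.e.\ $z$, and evaluating it at the base point $z=fx$ requires a Lebesgue-density argument along the leaf for $\mu$-a.e.\ $x$. The paper's route only needs $\frac{1}{n}\log k(x)\to 0$ for the a.e.-finite Radon--Nikodym derivative $k$, which is free, and therefore never touches these integrability and diagonal-evaluation issues; you should either fill them in or switch to the volume-estimate strategy.
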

\begin{remark}
  We only need to prove the ergodic versions of Theorem \ref{Ruelle} and Theorem \ref{mainresult} respectively, and the nonergodic versions of them follow immediately from the ergodic versions by decomposing $\mu$ into ergodic components (just as that has been done in \cite{LedrappierYoung1985a}). So in the following two sections, we always assume that $\mu$ is ergodic.
\end{remark}

In the following, we relate the entropy $h^i_\mu(f)$ along the unstable foliation $W^i$ with the supremum of certain conditional entropy of finite partitions with respect to a measurable partition subordinate to $W^i$. This idea derives from \cite{Wu2015}.
\begin{definition}\label{generalpartition}
Let $\mu$ be an ergodic $f$-invariant measure and $\mathscr P_M$ denote the set of all finite Borel partitions of $M$. \emph{The conditional entropy of $\alpha \in \mathscr P_M$ with respect to $\xi^i$} is defined as
\[
H^i(\alpha |\xi^i)=\int_{\Gamma_i}-\log\mu^{\xi^i}_x(\alpha(x)\cap\xi^i(x))\mathrm{d}\mu(x).
\]
\end{definition}

The following proposition gives an equivalent definition of $h_{\mu}^i(f)$.
\begin{proposition}\label{equivalence}Let $\mu$ be an ergodic measure, then we have
\[
h_{\mu}^i(f)=\underset{\alpha \in \mathscr P_M}{\sup}\underset{n\to \infty}{\limsup}\frac{1}{n}H^i(\overset{n-1}{\underset{i=0}{\bigvee}}f^{-i}\alpha|\xi^i).
\]
\end{proposition}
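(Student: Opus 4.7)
The plan is to prove both inequalities separately. For the easier $\ge$ direction, I fix $\ep>0$ and pick any finite Borel partition $\alpha$ of $M$ with $\mathrm{diam}(\alpha)<\ep$. Since the intrinsic metric $d^i$ is locally Lipschitz comparable to the ambient Riemannian metric $d$ on $W^i(x)$, common membership in an atom of $\alpha_n:=\bigvee_{k=0}^{n-1}f^{-k}\alpha$ forces $d^i$-closeness of $n$-orbits, giving $\alpha_n(x)\cap\xi^i(x)\subseteq V^i(f,x,n,C\ep)$ for a uniform $C$. Taking $-\log\mu^{\xi^i}_x(\cdot)$, integrating over $\mu$, dividing by $n$, and applying Fatou's lemma yields
\[
\liminf_{n\to\infty}\frac{1}{n}H^i(\alpha_n\mid\xi^i)\ge\int_M\liminf_{n\to\infty}\Bigl[-\frac{1}{n}\log\mu^{\xi^i}_x\bigl(V^i(f,x,n,C\ep)\bigr)\Bigr]d\mu(x).
\]
As $\ep\to 0$, the integrand increases to $h^i_\mu(f,x)=h^i_\mu(f)$ $\mu$-a.e.\ by the definition of $h^i_\mu(f,x)$ together with ergodicity; monotone convergence then gives $\sup_\alpha\limsup_n\frac{1}{n}H^i(\alpha_n\mid\xi^i)\ge h^i_\mu(f)$.

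For the $\le$ direction, since $\alpha\le\beta$ implies $H^i(\alpha_n\mid\xi^i)\le H^i(\beta_n\mid\xi^i)$, it suffices to treat partitions $\alpha$ with arbitrarily small diameter and $\mu(\partial\alpha)=0$ (the boundary condition arranged by the usual perturbation along distance level sets). Let $N_n(x)$ denote the number of atoms of $\alpha_n$ meeting $\xi^i(x)$; by concavity, $H^i(\alpha_n\mid\xi^i)\le\int\log N_n(x)\,d\mu(x)$. I bound $N_n(x)$ in two steps: first, a greedy covering using the lower bound $\mu^{\xi^i}_x(V^i(y,n,\delta))\ge e^{-n(h^i_\mu(f)+o(1))}$ coming from the definition of $h^i_\mu(f,x)$ shows $\xi^i(x)$ can be covered by at most $e^{n(h^i_\mu(f)+o(1))}$ Bowen balls of radius $\delta$; second, two atoms of $\alpha_n$ meet a single Bowen ball $V^i(y,n,\delta)$ only if some $f^k y$ lies within $2\delta$ of $\partial\alpha$, so Birkhoff's theorem applied to $\mathbbm{1}_{\partial_{2\delta}\alpha}$ bounds the number of such exceptional indices by $n\mu(\partial_{2\delta}\alpha)+o(n)$, hence each Bowen ball meets at most $(\#\alpha)^{n\mu(\partial_{2\delta}\alpha)+o(n)}$ atoms of $\alpha_n$. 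Multiplying and letting $\delta\to 0$ (so $\mu(\partial_{2\delta}\alpha)\to 0$) produces $\limsup_n\frac{1}{n}H^i(\alpha_n\mid\xi^i)\le h^i_\mu(f)$.

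The hard part is assembling the Birkhoff and Brin--Katok bounds uniformly across the leaves: both hold $\mu$-a.e., but the greedy covering of $\xi^i(x)$ by Bowen balls requires the measure estimate to hold uniformly on a fibre of $\mu^{\xi^i}_x$-full measure. This is achieved by an Egorov-type argument applied to the convergence $-\frac{1}{n}\log\mu^{\xi^i}_x(V^i(f,y,n,\delta))\to h^i_\mu(f,y)$ and to the Birkhoff convergence simultaneously, together with the measurability properties of $\xi^i$ from Lemma~\ref{specialpartition}, which allow us to integrate the pointwise $o(n)$ errors back against $\mu$ without losing the asymptotic rate.
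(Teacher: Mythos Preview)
The paper itself supplies no argument here: its ``proof'' is a one-line reference to \cite{Wu2015} for the analogous identity on the full unstable foliation of a partially hyperbolic map. Your sketch is the expected leaf-wise Brin--Katok argument, and the $\ge$ direction is correct as written (modulo the routine local comparison of $d$ and $d^i$, which the paper also takes for granted in Section~4).

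There is a real gap in the $\le$ direction. The inequality $H^i(\alpha_n\mid\xi^i)\le\int\log N_n(x)\,d\mu$ is fine, but your bound on $N_n(x)$ is not. An Egorov set $K$ has measure only $1-\eta$, and the Bowen balls produced by your greedy procedure---whose centres must lie in $K\cap\xi^i(x)$ so that both the conditional-measure lower bound and the Birkhoff estimate apply at the centre---cover only $G(x):=K\cap\xi^i(x)$, not all of $\xi^i(x)$. Atoms of $\alpha_n$ whose trace on $\xi^i(x)$ lies entirely in the bad slice $B(x):=\xi^i(x)\setminus K$ still contribute to $N_n(x)$, and there can be as many as $(\#\alpha)^n$ of them even when $\mu^{\xi^i}_x(B(x))$ is arbitrarily small; hence $\log N_n(x)$ may be of order $n\log(\#\alpha)$ for every $x$ with $\mu^{\xi^i}_x(B(x))>0$, and the bound degenerates to the trivial one. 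The standard repair is to split the \emph{entropy} rather than the cardinality: write $H_{\mu^{\xi^i}_x}(\alpha_n)$ as the sum over atoms meeting $G(x)$ plus the sum over atoms contained in $B(x)$; the first piece is at most $\log N_n^G(x)$ (the number of atoms meeting $G(x)$) and is controlled by your covering-plus-Birkhoff count, while the second is at most $\mu^{\xi^i}_x(B(x))\cdot n\log(\#\alpha)+1$ by the elementary inequality $-\sum_{j\in S}p_j\log p_j\le p(S)\log|S|-p(S)\log p(S)$. Integrating in $x$ turns the bad piece into $\mu(M\setminus K)\cdot n\log(\#\alpha)+O(1)\le\eta\, n\log(\#\alpha)+O(1)$, which vanishes after dividing by $n$ and letting $\eta\to 0$. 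With this modification your argument goes through.
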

\begin{proof}
Similar to the proof of $h_{\mu}^u(f)=\underset{\alpha \in \mathscr P_M}{\sup}\underset{n\to \infty}{\limsup}\frac{1}{n}H^u(\overset{n-1}{\underset{i=0}{\bigvee}}f^{-i}\alpha|\xi)$ in \cite{Wu2015}, where $\xi$ is a partition subordinate to the unstable foliation $W^u$. We omit the details.
\end{proof}

\begin{remark}
  In fact, the partitions used in Definition \ref{generalpartition} and Proposition \ref{equivalence} can be replaced by some more natural partitions. Roughly speaking, such partition is constructed via the intersection of a finite partition and the local unstable manifolds. For more details, the reader can refer to \cite{Wu2015}.
\end{remark}

As the classical measure-theoretic entropy and the topological entropy, the entropy along $i$th unstable foliation also has the so-called power rule.
\begin{proposition}[Power rule]
  For $m\geq1$, we have
  \begin{equation}\label{powerrule}
  h^i_{\mu}(f^m)=mh^i_{\mu}(f).
  \end{equation}
\end{proposition}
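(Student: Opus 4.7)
The plan is to reduce to the classical power-rule argument for Kolmogorov--Sinai entropy, executed with the conditional entropy $H^i(\,\cdot\mid\xi^i)$ of Definition \ref{generalpartition} in place of ordinary entropy. The bridge is Proposition \ref{equivalence}, which characterizes $h^i_\mu(g)$ as $\sup_{\alpha\in\mathscr P_M}\limsup_{n\to\infty}\frac{1}{n}H^i(\bigvee_{k=0}^{n-1}g^{-k}\alpha\mid\xi^i)$; this is the direct analogue of the variational formula for $h_\mu(g,\alpha)$.

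The first preliminary I would establish is that a single partition $\xi^i$ furnished by Lemma \ref{specialpartition} serves simultaneously for $f$ and for $f^m$. The foliation $W^i$ is the same for $f$ and $f^m$ because the Lyapunov exponents of $f^m$ are $m$ times those of $f$ and the defining exponential estimate (\ref{manifold}) is invariant under this rescaling. Condition (2) for $f^m$ reads $f^{-m}\xi^i\ge\xi^i$, which is immediate from iterating $f^{-1}\xi^i\ge\xi^i$; conditions (3) and (4) for $f^m$ follow from those for $f$ combined with the monotonicity of $\{f^{-n}\xi^i\}_{n\ge 0}$, which forces $\bigvee_n f^{-nm}\xi^i=\bigvee_n f^{-n}\xi^i$ and the analogous equality for the decreasing join.

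The second ingredient is the standard algebraic identity
\[
\bigvee_{k=0}^{nm-1}f^{-k}\alpha \;=\; \bigvee_{j=0}^{n-1}(f^m)^{-j}\beta,\qquad \beta:=\bigvee_{k=0}^{m-1}f^{-k}\alpha.
\]
The direction $h^i_\mu(f^m)\le m\,h^i_\mu(f)$ is routine: for any $\gamma\in\mathscr P_M$, the inclusion $\bigvee_{j=0}^{n-1}f^{-mj}\gamma\le\bigvee_{k=0}^{nm-1}f^{-k}\gamma$ together with monotonicity of conditional entropy gives $\tfrac{1}{n}H^i(\cdots\mid\xi^i)\le m\cdot\tfrac{1}{nm}H^i(\cdots\mid\xi^i)$, and taking $\limsup_n$ followed by $\sup_\gamma$ yields the bound. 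For the reverse direction, the identity applied with $\alpha$ and $\beta$ as above gives, along the subsequence $N=nm$,
\[
\limsup_{n\to\infty}\tfrac{1}{nm}H^i\!\left(\bigvee_{k=0}^{nm-1}f^{-k}\alpha\;\Big|\;\xi^i\right)=\tfrac{1}{m}\limsup_{n\to\infty}\tfrac{1}{n}H^i\!\left(\bigvee_{j=0}^{n-1}(f^m)^{-j}\beta\;\Big|\;\xi^i\right)\le\tfrac{1}{m}h^i_\mu(f^m).
\]

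The main technical step I expect to require care is upgrading this subsequence $\limsup$ to the full $\limsup$ over all $N$ before taking $\sup_\alpha$. Writing $N=nm+r$ with $0\le r<m$ and using monotonicity in $N$ of the conditional entropy gives the sandwich
\[
\tfrac{1}{N}H^i\!\left(\bigvee_{k=0}^{N-1}f^{-k}\alpha\;\Big|\;\xi^i\right)\le\tfrac{n+1}{n}\cdot\tfrac{1}{(n+1)m}H^i\!\left(\bigvee_{k=0}^{(n+1)m-1}f^{-k}\alpha\;\Big|\;\xi^i\right),
\]
whose prefactor tends to $1$, so that the full $\limsup$ coincides with the subsequence $\limsup$. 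Taking $\sup_\alpha$ then yields $h^i_\mu(f)\le\tfrac{1}{m}h^i_\mu(f^m)$. Apart from this sandwich, the argument is purely formal and uses no new dynamical input beyond Proposition \ref{equivalence} and Lemma \ref{specialpartition}.
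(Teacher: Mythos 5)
Your proof is correct in substance, but it takes a genuinely different route from the paper. The paper works directly with the Bowen‑ball/local‑entropy definition of $h^i_\mu$: it observes the inclusion $V^i(f^m,x,n,\varepsilon)\supset V^i(f,x,mn,\varepsilon)$ to get the upper bound, and uses uniform continuity of $f,\dots,f^{m-1}$ on a compact leaf neighborhood to produce $\delta$ with $V^i(f^m,x,n,\delta)\subset V^i(f,x,mn,\varepsilon)$ for the lower bound, then passes to the limit. You instead route everything through Proposition~\ref{equivalence} (the conditional‑entropy characterization over finite partitions), use the standard algebraic identity $\bigvee_{k=0}^{nm-1}f^{-k}\alpha=\bigvee_{j=0}^{n-1}(f^m)^{-j}\beta$ with $\beta=\bigvee_{k=0}^{m-1}f^{-k}\alpha$, and close the gap between the $N=nm$ subsequence and the full $\limsup$ with the usual sandwich bound. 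Both are sound; the paper's is more elementary in that it needs nothing beyond the definition and compactness, while yours transplants the textbook Kolmogorov--Sinai power‑rule argument into the conditional setting and is cleaner once Proposition~\ref{equivalence} is available. Two points you should make explicit to match the paper's hypotheses: (i) your use of Proposition~\ref{equivalence} for $f^m$ tacitly requires $\mu$ to be $f^m$-ergodic, which does not follow from $f$-ergodicity — this needs a word (e.g.\ decompose $\mu$ into its $f^m$-ergodic components, on each of which the whole leaf $W^i(x)$ lies, and average), whereas the paper's Bowen‑ball argument sidesteps this entirely; (ii) your observation that a single $\xi^i$ from Lemma~\ref{specialpartition} is simultaneously subordinate, increasing and generating for both $f$ and $f^m$ is correct and worth stating, since the paper uses the same fact implicitly.
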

\begin{proof}
  Let $\xi$ be a measurable partition of $M$ subordinate to $W^i$. Fix $\varepsilon>0$. It is clear that
  \[
  V^i(f^m,x,n,\varepsilon)\supset V^i(f,x,mn,\varepsilon),
  \]
  so, we have
  \[
  -\frac{1}{n}\log\mu_x^{\xi}V^i(f^m,x,n,\varepsilon)\leq-m\frac{1}{mn}\log \mu_i^{\xi}V^i(f,x,mn,\varepsilon).
  \]
  Let $n\to \infty$, and then $\varepsilon \to 0$, we obtain
  \begin{equation}\label{leq}
  h_{\mu}^i(f^m,x)\leq mh^i_{\mu}(f,x).
  \end{equation}

  On the other hand, pick $\delta_0>0$, define $W^i(x,\delta_0)=\{y\colon y\in W^i(x), d^i(y,x)<\delta_0\}$. Because of the compactness of $\overline{W^i(x,\delta_0)}$, we can pick $0\leq \delta\leq\varepsilon<\delta_0$ such that if $d^i(x,y)\leq \delta$, we have
  \[
  d^i(f^j(x),f^j(y))\leq\varepsilon, \forall 0\leq j \leq m-1.
  \]
  It follows that
  \[
  V^i(f^m,x,n,\delta)\subset V^i(f,x,mn,\varepsilon),
  \]
  and hence,
  \[
  -\frac{1}{n}\log\mu_x^{\xi}V^i(f^m,x,n,\delta)\geq-m\frac{1}{mn}\log \mu_i^{\xi}V^i(f,x,mn,\varepsilon).
  \]
  Let $n\to \infty$, and then $\varepsilon \to 0$ (hence $\delta \to 0$), we obtain
  \begin{equation}\label{geq}
  h_{\mu}^i(f^m,x)\geq mh^i_{\mu}(f,x).
  \end{equation}

  (\ref{powerrule}) follows from (\ref{leq}) and (\ref{geq}) immediately.
\end{proof}
\section{Proof of Theorem \ref{Ruelle}}
Now we complete the proof of Theorem \ref{Ruelle}. Firstly, we need the following definition from  \cite{Wu2015}.
  \begin{definition}
    Pick $0<\delta<r$, where $r$ is as in the proof of Lemma \ref{specialpartition}, and $\varepsilon>0$ small enough. Let $S\subseteq\overline{W^i(x,\delta)}$ satisfying
    \[
    d(f^jy,f^jz)\geq\varepsilon, \exists 0\leq j\leq n-1, \forall y,z\in S,
    \]
    we call $S$ an $(n,\varepsilon)$ $i$-separated set of $\overline{W^i(x,\delta)}$. Let $N^i(f,\varepsilon,n,x,\delta)$ denote the largest cardinality of any $(n,\varepsilon)$ $i$-separated set in $\overline{W^i(x,\delta)}$.

    Let $R\subseteq\overline{W^i(x,\delta)}$ satisfying
    \[
    d(f^jy,f^jz)<\varepsilon, 0\leq j\leq n-1, \forall y,z\in S,
    \]
    we call $R$ an $(n,\varepsilon)$ $i$-spanning set of $\overline{W^i(x,\delta)}$. Let $S^i(f,\varepsilon,n,x,\delta)$ denote the smallest cardinality of any $(n,\varepsilon)$ $i$-spanning set in $\overline{W^i(x,\delta)}$.
  \end{definition}

  The following lemma gives us a relation between $N^i(f,\varepsilon,n,x,\delta)$ and \\ $S^i(f,\varepsilon,n,x,\delta)$.
  \begin{lemma}\label{SN}
    \[
    \underset{\varepsilon\to 0}{\lim}\underset{n\to\infty}{\limsup}\frac{1}{n}\log N^i(f,\varepsilon,n,x,\delta)=\underset{\varepsilon\to 0}{\lim}\underset{n\to\infty}{\limsup}\frac{1}{n}\log S^i(f,\varepsilon,n,x,\delta).
    \]
  \end{lemma}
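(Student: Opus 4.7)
The plan is to prove the equality via the classical sandwich argument relating spanning and separated numbers, adapted to the setting where both kinds of sets are constrained to lie inside the compact local unstable disk $\overline{W^i(x,\delta)}$ but the separation/spanning condition is measured in the ambient metric $d$ on $M$.

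First I would establish two elementary comparisons, valid for every fixed $n$, $\varepsilon$, $x$, $\delta$:
\begin{equation*}
N^i(f,2\varepsilon,n,x,\delta)\ \leq\ S^i(f,\varepsilon,n,x,\delta)\ \leq\ N^i(f,\varepsilon,n,x,\delta).
\end{equation*}
The right inequality is obtained by taking a maximal $(n,\varepsilon)$ $i$-separated set $S\subseteq\overline{W^i(x,\delta)}$; by maximality, every $y\in\overline{W^i(x,\delta)}$ must be $(n,\varepsilon)$-close to some point of $S$ in the ambient metric, so $S$ is already $(n,\varepsilon)$ $i$-spanning, giving $S^i\le |S|=N^i$. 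For the left inequality I would pick an $(n,\varepsilon)$ $i$-spanning set $R$ and an $(n,2\varepsilon)$ $i$-separated set $T$ in $\overline{W^i(x,\delta)}$ and define a map $\phi:T\to R$ sending each $y\in T$ to any point $\phi(y)\in R$ satisfying $\max_{0\le j\le n-1}d(f^jy,f^j\phi(y))<\varepsilon$. If $\phi(y_1)=\phi(y_2)$ then the triangle inequality in the Bowen (dynamical) distance forces $\max_{0\le j\le n-1}d(f^jy_1,f^jy_2)<2\varepsilon$, contradicting that $T$ is $(n,2\varepsilon)$-separated; hence $\phi$ is injective and $|T|\le |R|$, giving $N^i(f,2\varepsilon,\cdots)\le S^i(f,\varepsilon,\cdots)$.

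Next I would take $\log$, divide by $n$, and pass to $\limsup_{n\to\infty}$ to obtain
\begin{equation*}
\limsup_{n\to\infty}\tfrac{1}{n}\log N^i(f,2\varepsilon,n,x,\delta)\ \le\ \limsup_{n\to\infty}\tfrac{1}{n}\log S^i(f,\varepsilon,n,x,\delta)\ \le\ \limsup_{n\to\infty}\tfrac{1}{n}\log N^i(f,\varepsilon,n,x,\delta).
\end{equation*}
Both $\limsup$'s are monotone non-increasing functions of $\varepsilon$ (shrinking $\varepsilon$ can only enlarge the collection of separated or spanning sets), so the limits as $\varepsilon\to 0^+$ exist in $[0,\infty]$, and letting $\varepsilon\to 0$ the outer terms above share the same limit. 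The middle term is therefore sandwiched, and we conclude
\begin{equation*}
\lim_{\varepsilon\to 0}\limsup_{n\to\infty}\tfrac{1}{n}\log N^i(f,\varepsilon,n,x,\delta)\ =\ \lim_{\varepsilon\to 0}\limsup_{n\to\infty}\tfrac{1}{n}\log S^i(f,\varepsilon,n,x,\delta).
\end{equation*}

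I do not expect a genuine obstacle here: the statement is the well-known Bowen-type equivalence between separated and spanning exponential growth rates, and the only point to check is that both quantities are finite for each $(\varepsilon,n)$, which follows because $\overline{W^i(x,\delta)}$ is compact (a closed bounded ball in the $I(i)$-dimensional immersed $C^1$ submanifold $W^i(x)$ provided by Proposition \ref{stablemanifold}), so that maximal $\varepsilon$-separated sets are automatically finite. The mild technical care needed is merely to carry out the triangle-inequality step with the ambient metric $d$ used in the definition (rather than the intrinsic $d^i$), which it plainly allows.
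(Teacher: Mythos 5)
Your proposal is correct and is essentially the same classical Bowen-type sandwich argument that the paper invokes by reference to Lemma 3.5 of \cite{Wu2015}: the two inequalities $N^i(f,2\varepsilon,n,x,\delta)\le S^i(f,\varepsilon,n,x,\delta)\le N^i(f,\varepsilon,n,x,\delta)$ followed by taking $\limsup_n$ and then $\varepsilon\to 0$, with finiteness supplied by the compactness of $\overline{W^i(x,\delta)}$. There is no substantive difference in method.
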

  \begin{proof}
    cf.  the proof of Lemma 3.5 in \cite{Wu2015}.
  \end{proof}

  The estimation of $h^i_{\mu}(f)$ from above is based on the following lemma.
  \begin{lemma}\label{ergodic}
    Let $\Sigma\subset M$ with $\mu(\Sigma)=1$, and assume that $\lambda_i(x)$, $m_i(x)$ are constant when $x\in \Sigma$, then for any $\rho>0$, there exists $x\in \Sigma$ such that
    \[
      h^i_{\mu}(f)-\rho\leq\underset{\varepsilon\to 0}{\lim}\underset{n\to\infty}{\limsup}\frac{1}{n}\log S^i(f,\varepsilon,n,x,\delta).
    \]
  \end{lemma}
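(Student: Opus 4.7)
The plan is to exploit the pointwise definition
\[
h^i_\mu(f,x)=\lim_{\varepsilon\to 0}\liminf_{n\to\infty}-\tfrac{1}{n}\log\mu_x^{\xi^i}(V^i(f,x,n,\varepsilon)),
\]
which by ergodicity coincides $\mu$-a.e.\ with the constant $h^i_\mu(f)$. Given $\rho>0$, an Egorov-type extraction produces a measurable subset $A\subseteq\Sigma$ with $\mu(A)>0$, a threshold $N_0$ and some $\varepsilon_0>0$ such that for every $x\in A$, every $n\geq N_0$ and every $\varepsilon\leq\varepsilon_0$,
\[
\mu_x^{\xi^i}(V^i(f,x,n,\varepsilon))\leq e^{-n(h^i_\mu(f)-\rho/2)}.
\]
By further trimming I would also arrange $\xi^i(x)\subseteq\overline{W^i(x,\delta)}$ for $x\in A$ (refining $\xi^i$ if necessary; this does not change $h^i_\mu(f)$) and obtain a uniform lower bound $\mu_x^{\xi^i}(A\cap\xi^i(x))\geq\theta>0$ from the disintegration $\mu(A)=\int\mu_x^{\xi^i}(A)\,d\mu$ together with another Egorov step.

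Fix such an $x\in A$ and let $R=\{y_1,\dots,y_{k_n}\}$ be a minimal $(n,\varepsilon)$ $i$-spanning set of $\overline{W^i(x,\delta)}$, so that $k_n=S^i(f,\varepsilon,n,x,\delta)$. Writing
\[
B_n(y,\varepsilon)=\{z\in M:d(f^mz,f^my)<\varepsilon,\ 0\leq m\leq n-1\},
\]
the spanning property and $\xi^i(x)\subseteq\overline{W^i(x,\delta)}$ give $\xi^i(x)\subseteq\bigcup_{j}B_n(y_j,\varepsilon)$. For each $j$ with $B_n(y_j,\varepsilon)\cap\xi^i(x)\cap A\neq\emptyset$ pick a witness $z_j$. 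Any $w\in B_n(y_j,\varepsilon)\cap\xi^i(x)$ then satisfies $d(f^mw,f^mz_j)\leq 2\varepsilon$, and the local comparability of the ambient metric $d$ with the intrinsic metric $d^i$ on $W^i$-leaves --- valid with a uniform constant $C$ by the $C^1$-regularity of the unstable lamination provided by Proposition \ref{stablemanifold} --- yields $w\in V^i(f,z_j,n,C\varepsilon)$. Since $\xi^i(x)=\xi^i(z_j)$ the associated conditional measures agree, and the uniform bound above gives
\[
\mu_x^{\xi^i}\bigl(B_n(y_j,\varepsilon)\cap\xi^i(x)\bigr)\leq\mu_{z_j}^{\xi^i}\bigl(V^i(f,z_j,n,C\varepsilon)\bigr)\leq e^{-n(h^i_\mu(f)-\rho/2)},
\]
provided $C\varepsilon\leq\varepsilon_0$ and $n\geq N_0$.

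Summing over those indices and using that $A\cap\xi^i(x)$ is covered by the corresponding balls,
\[
\theta\leq\mu_x^{\xi^i}(A\cap\xi^i(x))\leq k_n\cdot e^{-n(h^i_\mu(f)-\rho/2)},
\]
so $\tfrac{1}{n}\log k_n\geq h^i_\mu(f)-\rho/2+\tfrac{1}{n}\log\theta$. Passing to $\limsup_{n\to\infty}$ and then $\varepsilon\to 0$ delivers the target inequality, since $\rho/2<\rho$.

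The principal obstacle is the geometric control invoked in the second paragraph: I need a uniform local comparability between the ambient and the intrinsic distances along forward iterates of an unstable disc of radius $\delta$. In a $C^{1+\alpha}$ setting this is a standard Pesin estimate, but under only $C^1$-regularity plus dominated splitting one must lean on Proposition \ref{stablemanifold} together with the controlled distortion of forward iterates of bounded unstable discs at the time scales compatible with Bowen balls of $d^i$-radius $\varepsilon$. A secondary technical point is arranging $\xi^i(x)\subseteq\overline{W^i(x,\delta)}$ for the prescribed $\delta$, which may force replacing $\xi^i$ by a measurable refinement --- permissible because $h^i_\mu(f)$ is independent of the particular subordinate partition.
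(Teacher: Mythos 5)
Your overall strategy coincides with the paper's: using ergodicity and an Egorov-type argument to produce, for a fixed $x$, a positive-$\mu^{\xi^i}_x$-measure set on which the conditional measure of $V^i$-balls decays at rate at least $h^i_\mu(f)-\rho$, and then bounding $S^i(f,\varepsilon,n,x,\delta)$ from below by covering $\xi^i(x)$ with Bowen balls and using additivity of $\mu^{\xi^i}_x$. That is exactly the paper's skeleton.

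The difference, and where your argument genuinely breaks, is in the middle step. You take a minimal spanning set with respect to the \emph{ambient} Bowen balls $B_n(y,\varepsilon)$ and then claim that any $w\in B_n(y_j,\varepsilon)\cap\xi^i(x)$ lies in $V^i(f,z_j,n,C\varepsilon)$, invoking a uniform comparability of $d$ and $d^i$ along the forward orbit. This is not a secondary obstacle that can be patched by $C^1$-regularity of the lamination: the sets $f^m(\xi^i(x))$ are increasingly long pieces of $W^i(f^m x)$, and two points $f^m w, f^m z_j$ on such a piece can be ambient-close with intrinsic distance of order $\delta$ or larger, because the iterated disc can fold back near itself. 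Ambient shadowing along an orbit simply does not imply intrinsic shadowing on the unstable leaf without some additional local product structure, and nothing in the hypotheses ($C^1$ plus dominated splitting) rules out the folding. So the inequality $\mu_x^{\xi^i}(B_n(y_j,\varepsilon)\cap\xi^i(x))\leq \mu^{\xi^i}_{z_j}(V^i(f,z_j,n,C\varepsilon))$ is not justified.

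The paper never needs a $d$-versus-$d^i$ comparison at all. The $(n,\varepsilon)$ $i$-spanning/separating notions are intended to be taken with respect to the leafwise metric $d^i$ (the ``$d$'' in the paper's definition is a slip; compare the source \cite{Wu2015}, where $d^u$ is used). With that reading, $R_n$ directly gives a cover of $\overline{W^i(x,\delta)}\cap B$ by intrinsic Bowen balls $V^i(f,z,n,\varepsilon/2)$; picking a witness $y(z)\in V^i(f,z,n,\varepsilon/2)\cap B$ yields, by the triangle inequality \emph{in $d^i$ alone}, $V^i(f,z,n,\varepsilon/2)\subset V^i(f,y(z),n,\varepsilon)$, and then the counting argument finishes. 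That way there is no distortion estimate, no plaque/local-graph lemma, and no appeal to uniform Lipschitz embedding of leaves under forward iteration. If you rewrite your covering step using $d^i$-spanning sets and witness points on the leaf, your proof reduces to the paper's and the ``principal obstacle'' you flagged disappears.

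Two minor remarks. First, your secondary worry about enforcing $\xi^i(x)\subseteq\overline{W^i(x,\delta)}$ is handled in the paper simply by choosing $\delta$ large enough that $W^i(x,\delta)\supset\xi(x)$; no refinement of $\xi^i$ is needed once $x$ is fixed. Second, when you replace $B_n(\cdot,\varepsilon)$ by $V^i(\cdot,n,\varepsilon/2)$ as above, the requirement $C\varepsilon\leq\varepsilon_0$ in your argument becomes the paper's requirement $\varepsilon<\varepsilon_1$ on the Egorov scale, and the constant $C$ never appears.
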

  \begin{proof}
    Let $\xi^i=\xi$ be any measurable partition subordinate to $i$th unstable foliation as in Lemma \ref{specialpartition}. Since $\mu$ is ergodic, then we can pick $x\in \Sigma$ with the following property: there exists a set $B\subset\xi(x)$ with $\mu^{\xi}_x(B)=1$, such that
     \[
     h^i_{\mu}(f,y,\xi)=h_{\mu}^i(f,\xi), \forall y\in B.
     \]

    In fact, $h^i_{\mu}(f,x,\xi)$ is $\mu$-a.e. constant, let $\Sigma_1$ be the set of $x\in M$ where $h^i_{\mu}(f,x,\xi)$ is constant. Let $\Sigma_2$ be the set of $x\in M$ such that $\mu^{\xi}_x(\xi(x))=1$, it is clear that $\mu(\Sigma_2)=1$. We can pick $x\in\Sigma\cap\Sigma_1\cap\Sigma_2$, and let $B=\xi(x)\cap\Sigma\cap\Sigma_1\cap\Sigma_2$. It is clear that $\mu^{\xi}_x(B)=1$ and B satisfies the property above.

    The property above implies that for any $\rho>0$ and $y\in B$, there exists $\varepsilon_0(y)$, such that if $0<\varepsilon<\varepsilon_0(y)$, then
    \begin{equation}\label{eqabove}
      \underset{n\to\infty}{\liminf}-\frac{1}{n}\log\mu^{\xi}_y(V^i(f,y,n,\varepsilon))\geq h_{\mu}^i(f,\xi)-\rho.
    \end{equation}

    Denote $B_\varepsilon:=\{y\in B|\varepsilon_0\geq\varepsilon\}$, then $B=\cup_{\varepsilon>0}B_\varepsilon$. So there exists $\varepsilon_1>0$ such that $\mu^{\xi}_x(B_\varepsilon)>1-\rho$ for any $\varepsilon<\varepsilon_1$. Fix such an $\varepsilon$. (\ref{eqabove}) implies that for any $y\in B_\varepsilon$ there exists $N=N(y)>0$ such that if $n\geq N$, then
    \[
    \mu^{\xi}_y(V^i(f,y,n,\varepsilon))\leq e^{-n(h_{\mu}(f,\xi)-\rho)}.
    \]

    Denote $B_\varepsilon^n:=\{y\in B_\varepsilon|N(y)\leq n\}$. Then $B=\cup_{n=1}^\infty B_\varepsilon^n$. So there exists $N$ large enough such that $\mu^{\xi}_x(B_\varepsilon^n)>\mu^{\xi}_x(B_\varepsilon)-\rho>1-2\rho$. Since $y\in\xi(x)$, $\mu^{\xi}_y=\mu^{\xi}_x$, for any $n\geq N$, one has
    \[
    \mu^{\xi}_x(V^i(f,y,n,\varepsilon))\leq e^{-n(h_{\mu}(f,\xi)-\rho)}.
    \]

    Now we take $\delta>0$ with $W^i(x,\delta)\supset\xi(x)$. Then there exists a set $R_n$ with cardinality no more than $S^i(f,\frac{\varepsilon}{2},n,x,\delta)$, such that
    \[
    \overline{W^i(x,\delta)}\cap B\subset\underset{z\in R_n}{\bigcup}V^i(f,z,n,\frac{\varepsilon}{2}),
    \]
    and $V^i(f,z,n,\frac{\varepsilon}{2})\cap B\neq \emptyset$. Choose an arbitrary point in $V^i(f,z,n,\frac{\varepsilon}{2})\cap B$ and denote it by $y(z)$. Then we have
    \begin{align*}
       1-2\rho &\leq \mu^{\xi}_x(\overline{W^i(x,\delta)}\cap B) \\
      {} &\leq \mu^{\xi}_x(\underset{z\in R_n}{\bigcup}V^i(f,z,n,\frac{\varepsilon}{2})) \\
      {} &\leq \sum_{z\in R_n}\mu^{\xi}_x(V^i(f,z,n,\frac{\varepsilon}{2})) \\
      {} &\leq \sum_{z\in R_n}\mu^{\xi}_x(V^i(f,y(z),n,\varepsilon)) \\
      {} &\leq S^i(f,\frac{\varepsilon}{2},n,x,\delta)e^{-n(h^i_{\mu}(f,\xi)-\rho)}.
    \end{align*}
And hence $S^i(f,\frac{\varepsilon}{2},n,x,\delta)\geq e^{n(h^i_{\mu}(f,\xi)-\rho)}$. Thus we have
    \[
    h^i_{\mu}(f)-\rho=h^i_{\mu}(f,\xi)-\rho\leq\underset{\varepsilon\to 0}{\lim}\underset{n\to\infty}{\limsup}\frac{1}{n}\log S^i(f,\varepsilon,n,x,\delta).
    \]
  \end{proof}

Now we begin the proof of Theorem \ref{Ruelle}. Let $S_n\subset\overline{W^i(x,\delta)}$ be an $(n,\varepsilon)$ $i$-separated set with the largest cardinality. When $n$ is large enough, we can pick $y_n\in S_n$ such that
  \begin{equation}\label{number}
    N^i(f,\varepsilon,n,x,\delta)\leq\frac{\tilde V_{x,\delta}}{\mathrm{Vol}(\exp^{-1}V^i(f,y_n,n,\frac{\varepsilon}{2}))},
  \end{equation}
  where $\exp_x$ is the exponential map at $x$, $\tilde V_{x,\delta}=\mathrm{Vol}(\exp^{-1}\overline{W^i(x,\delta)})$, and $\mathrm{Vol}(\cdot)$ denotes the volume function.

  Because of the compactness of $M$, we can choose $\delta>0$ small enough such that $\exp_x$ is a diffeomorphism and $d(\mathrm{exp_x}v,x)=\Vert v\Vert$ for $v\in T_xM$, $\Vert v\Vert<\delta$. In order to avoid a cumbersome computation, for every $x\in M$, we treat the tangent space $T_xM$ as it were $\mathbbm{R}^n$. We denote the Jacobian determinant of $\exp_x^{-1}f^{-1}\exp_{f(x)}|_{F(x)}$ at $y\in F(x)$ by $J_x(y)$. For any $\varepsilon>0$, we can choose $0<\delta(\varepsilon)<\frac{\delta}{2}$ such that $||J_x(y_1)|-|J_x(y_2)||<\varepsilon$, for any $x\in M$ and $y_1$, $y_2$$\in\pi_xB(0_x,\delta(\varepsilon))$, where $\pi_x\colon T_xM\to F(x)$ is the projection and $0_x$ is the null vector in $T_xM$. Let $\varepsilon_0\colon=\frac{1}{2}\inf\{|J_x(y)|:x\in M,y\in\pi_x\overline{B(0_x,\frac{\delta}{2})}\}$. Then for any $x\in M$, we have
  \begin{equation}\label{determinant}
    \frac{1}{2}<\frac{|J_x(y_1)|}{|J_x(y_2)|}<2,
  \end{equation}
  for any $y_1$, $y_2$$\in \pi_x B(0_x,\delta_0)$, where $\delta_0=\delta(\varepsilon)$.

  Fix $0<\varepsilon<\frac{\delta_0}{3}$, pick $\tilde{y}_n\in\overline{W^i(x,\delta)}$ such that $f^n(\tilde{y_n})\in B(f^n(y_n),\frac{\varepsilon}{2})\cap W^i(f^n(x))$ and let
  \[
  B^i_n=\exp_{f^n(\tilde{y}_n)}^{-1}B(f^n(y_n),\frac{\varepsilon}{2})\cap W^i(f^n(x)).
  \]
  Then we have
  \begin{align*}
  & \mathrm{Vol}(\exp_{\tilde{y}_n}^{-1}f^{-n}\exp_{f^n(\tilde{y}_n)}B^i_n) \\
  ={}& \mathrm{Vol}(\exp_{\tilde{y}_n}^{-1}f^{-1}\exp_{f(\tilde{y}_n)}\exp_{f(\tilde{y}_n)}^{-1}f^{-1}\exp_{f^2(\tilde{y}_n)}\cdots\exp_{f^{n-1}(\tilde{y}_n)}^{-1}f^{-1}\exp_{f^n(\tilde{y}_n)}B^i_n) \\
  ={}& \int_{B^i_n}|J_{\tilde{y}_n}(\tilde{f}_{n-1}(y))||J_{f\tilde{y}_n}(\tilde{f}_{n-2}(y))|\cdots|J_{f^{n-1}\tilde{y}_n}((y))|\mathrm{d}\lambda,
  \end{align*}
  where $\tilde{f}_j(y)=\exp_{f^{n-j}(\tilde{y}_n)}^{-1}f^{-j}\exp_{f^n(\tilde{y}_n)}$, $j=1,2,\cdots,n-1$ and $\lambda$ is the Lebesgue measure on $B^i_n$.

  Notice the definition of $B^i_n$, so we have
  \[
  \tilde{f}_{n-j}(y)\in\pi_xB(0_{f^{j-1}(\tilde{y}_n)},\frac{\varepsilon}{2}),
  \]
  for $j=1,2,\cdots,n-1$ and $y\in B^i_n$. Hence by (\ref{determinant}) we have
  \[
  |J_{f^{j-1}(\tilde{y}_n)}(\tilde{f}_{n-j}(y))|>\frac{1}{2}|J_{f^{j-1}(\tilde{y}_n)}(0_{f^{j-1}(\tilde{y}_n)})|.
  \]
  So we have
  \begin{align*}
     & \mathrm{Vol}(\exp_{\tilde{y}_n}^{-1}f^{-n}\exp_{f^n(\tilde{y}_n)}B^i_n) \\
    \geq{}& \frac{1}{2^n}\int_{B^i_n}\prod_{j=1}^{n-1}|J_{f^{j-1}(\tilde{y}_n)}(0_{f^{j-1}(\tilde{y}_n)})||J_{f^{n-1}(\tilde{y}_n)}(0_{\tilde{y}_n})|\mathrm{d}\lambda \\
    ={} & \frac{1}{2^n}\mathrm{Vol}(D_{f^n(\tilde{y}_n)}f^{-n}B^i_n).
  \end{align*}

  The last equality follows that $D\exp_x|_{0_x}$ is an identity.

  Let $R_{m,\varepsilon'}$ be the set of $x\in M$ such that for any $n\geq m$ and $v\in E_i(x)$, we have $\Vert D_xf^{-n}v \Vert\geq e^{n(-\lambda_i-\varepsilon')}$. By Oseledec's Theorem, we know that
  \[
  \lim_{m\to\infty}\mu(R_m,\varepsilon')=1.
  \]
  Without loss of generality, we assume that for any $y\in M$ and $a>0$, $\mu(B(y,a))>0$. In fact, let
  \[
  A=\{ y\in M\colon \exists a>0 \text{ such that }\mu(B(y,a))=0 \}.
  \]
  It is easy to check that $A$ is an $f$-invariant set and $\mu(A)=0$. So for $\varepsilon$, there exists $N>0$ such that for any $n\geq N$,
  \[
  B(y_n,\frac{\varepsilon}{2})\cap R_{n,\varepsilon'}\neq\emptyset.
  \]
  And for every $n\geq N$ we can choose an appropriate $x_n\in M$ such that
  \[
  B(y_n,\frac{\varepsilon}{2})\cap W^i(f^n(x_n))\cap R_{n,\varepsilon'}\neq\emptyset.
  \]
  So when $n$ is large enough such that
  \[
  \frac{1}{n}\log\frac{1}{C}+\frac{1}{n}\mathrm{dim}M\log\varepsilon<\varepsilon',
  \]
  we can pick $\tilde{y}_n$ from the set $B(y_n,\frac{\varepsilon}{2})\cap W^i(f^n(x_n))\cap R_{n,\varepsilon'}$. Hence when $n$ is large enough, we have
  \begin{align*}
  & -\frac{1}{n}\log\mathrm{Vol}(\exp_{\tilde{y}_n}^{-1}f^{-n}\exp_{f^n(\tilde{y}_n)}B^i_n) \\
  \leq{}& -\frac{1}{n}\log\frac{1}{2^n}\mathrm{Vol}(D_{f^n(\tilde{y}_n)}f^{-n}B^i_n) \\
  \leq{}& -\frac{1}{n}\log\frac{1}{2^n}C\prod_{j=1}^{u_i}\varepsilon e^{nm_j(-\lambda_j-\varepsilon')} \\
  \leq{}& \sum_{j=1}^{u_i}\lambda_jm_j+\log2+\mathrm{dim}M\varepsilon'+\frac{1}{n}\log\frac{1}{C}+\frac{1}{n}\mathrm{dim}M\log\varepsilon \\
  \leq{}& \sum_{j=1}^{u_i}\lambda_jm_j+\log2+(\mathrm{dim}M+1)\varepsilon'.
  \end{align*}
  where the constant $C$ only related to $f$.

  Since
  \begin{align*}
      & \underset{\varepsilon\to 0}{\lim}\underset{n\to\infty}{\limsup}\frac{1}{n}\log N^i(f,\varepsilon,n,x,\delta) \\
    \leq{} & \underset{\varepsilon\to 0}{\lim}\underset{n\to\infty}{\limsup}\frac{1}{n}(\log\widetilde{V}_{x, \delta}-\log\mathrm{Vol}(\exp_x^{-1}V^i(f,y_n,n,\frac{\varepsilon}{2}))),
  \end{align*}
  and notice that
  \[
  \underset{n\to\infty}{\limsup}\frac{1}{n}\log\widetilde{V}_{x, \delta}=0,
  \]
  so let $\varepsilon'\to0$, using Lemma \ref{SN} and Lemma \ref{ergodic}, we obtain
  \[
  h^i_{\mu}(f)-\rho\leq\sum_{j\leq u(i)}\lambda_jm_j+\log2.
  \]
Let $\rho\to 0$, we obtain
  \[
  h^i_{\mu}(f)\leq\sum_{j\leq u(i)}\lambda_jm_j+\log2.
  \]
For $N>0$, let $g=f^N$, then we have
  \[
  h^i_{\mu}(f)=\frac{1}{N}h^i_{\mu}(g)\leq\sum_{j\leq u(i)}\lambda_jm_j+\frac{1}{N}\log2.
  \]
  Let $N\to\infty$, we obtain
  \[
  h^i_{\mu}(f)\leq\sum_{j\leq u(i)}\lambda_jm_j.
  \]

  Now we have completed the proof of Theorem \ref{Ruelle}.
\section{Proof of Theorem \ref{mainresult}}
Now we start to prove Theorem \ref{mainresult}. By Theorem \ref{Ruelle}, we only need to complete the estimation of $h^i_{\mu}(f)$ from below. Firstly, we give the following lemma.
\begin{lemma}
  let $\lambda_x^i$ be the corresponding Riemannian measure on $W^i(x)$ and $\xi^i=\xi$ be a measurable partition as in Lemma \ref{specialpartition}. If for $\mu$-a.e. $x\in M$, $\mu^{\xi}_x\ll \lambda^i_x$, we have
  \begin{equation}\label{eq:lemma}
  h^i_{\mu}(f)\geq \int_{\Gamma_i}\underset{\varepsilon\to 0}{\lim}\underset{n\to\infty}{\liminf}\frac{1}{n}(-\log\lambda_x^i(V^i(f,x,n, \varepsilon)))\mathrm{d}\mu(x).
  \end{equation}
\end{lemma}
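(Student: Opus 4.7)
The plan is to exploit the absolute continuity hypothesis to compare the two measures $\mu^\xi_x$ and $\lambda^i_x$ on small Bowen balls. Writing $\rho_x:=d\mu^\xi_x/d\lambda^i_x$ for the Radon--Nikodym density, the idea is to find, for each $\eta>0$, a ``good'' set $E_\eta\subseteq\Gamma_i$ with $\mu(E_\eta)>1-\eta$ on which $\rho_x$ is essentially bounded by a constant $C_\eta$ on a uniform $W^i$-neighborhood $B^i(x,r_\eta)\cap\xi(x)$ of the base point $x$. The construction of $E_\eta$ is a Lusin/Egorov-type exhaustion: $\rho_x$ is $\lambda^i_x$-a.e.\ finite and integrable, and the set of base points where the density fails to be essentially bounded on any neighborhood has small $\mu$-measure that can be made arbitrarily small by choosing $C_\eta$ large and $r_\eta$ small.

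Once $E_\eta$ is in hand, I would use that along the unstable direction $F$ the diffeomorphism $Df$ expands by at least $e^{\lambda_{u(i)}-\varepsilon}$ per step (Assumption \ref{ergodic_case}), so the diameter of $V^i(f,x,n,\varepsilon)$ is at most of order $\varepsilon e^{-n(\lambda_{u(i)}-\varepsilon)}$. Hence for $x\in E_\eta$ and $n$ sufficiently large (depending on $x$), $V^i(f,x,n,\varepsilon)\subseteq B^i(x,r_\eta)$, which yields the crucial pointwise bound
\[
\mu^\xi_x(V^i(f,x,n,\varepsilon))=\int_{V^i(f,x,n,\varepsilon)}\rho_x\,d\lambda^i_x\;\le\;C_\eta\cdot\lambda^i_x(V^i(f,x,n,\varepsilon)).
\]
Applying $-\frac{1}{n}\log$ and passing to $\liminf_{n\to\infty}$ the contribution $-\frac{1}{n}\log C_\eta$ vanishes, giving for every $x\in E_\eta$ and $\varepsilon$ sufficiently small the comparison
\[
\liminf_{n\to\infty}-\frac{1}{n}\log\mu^\xi_x(V^i(f,x,n,\varepsilon))\;\ge\;\liminf_{n\to\infty}-\frac{1}{n}\log\lambda^i_x(V^i(f,x,n,\varepsilon)).
\]

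Letting $\varepsilon\to 0$, integrating over $E_\eta$, and sending $\eta\to 0$ finishes the proof: the contribution from $E_\eta^{c}$ is negligible because the right-hand integrand is dominated by the Lyapunov sum $\sum_{j\le u(i,x)}m_j(x)\lambda_j(x)$, which is integrable by what was already established in the proof of Theorem \ref{Ruelle}. The main obstacle is this local-boundedness step for $\rho_x$: a priori $\rho_x$ is only $L^1$, and the exponentially increasing eccentricity of unstable Bowen balls (different Lyapunov exponents in $F$) rules out a direct Vitali/Lebesgue-differentiation argument. It is therefore essential to isolate the uniform essentially bounded piece $E_\eta$ and, only on it, to trade $\mu^\xi_x$ for $\lambda^i_x$ on the shrinking Bowen balls.
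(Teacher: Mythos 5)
Your overall strategy---compare $\mu^\xi_x$ with $\lambda^i_x$ through the Radon--Nikodym density on shrinking dynamical balls, get a pointwise inequality of the $\liminf$'s, then integrate---is in the same spirit as the paper's. But the specific mechanism you propose for obtaining the crucial pointwise bound has a genuine gap, and this is precisely the place where the paper does something different.

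You want to construct, via a Lusin/Egorov-type exhaustion, a large-measure set $E_\eta$ of base points $x$ on which the density $\rho_x=d\mu^\xi_x/d\lambda^i_x$ is \emph{essentially bounded} on a uniform $W^i$-neighborhood $B^i(x,r_\eta)$ of $x$. Absolute continuity does not give you this. What you do get from $\mu^\xi_x\ll\lambda^i_x$ is that $\rho_x\in L^1(\lambda^i_x)$, and by Fubini over the disintegration that $\rho_x(x)$ (the density evaluated at the base point, suitably interpreted) is finite for $\mu$-a.e.\ $x$. But an $L^1$ density need not be locally essentially bounded near almost every point of its domain; the classical example $\sum_n 2^{-n}|y-q_n|^{-1/2}$ with $\{q_n\}$ dense is integrable yet unbounded on every neighborhood of every point. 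Nothing in the hypotheses rules out such behavior for $\rho_x$, so the set $\{x:\exists\,C,r>0 \text{ with }\rho_x\le C \text{ a.e.\ on } B^i(x,r)\}$ may well have $\mu$-measure zero, and no choice of $C_\eta$ and $r_\eta$ rescues it. You correctly diagnosed that the eccentricity of the unstable Bowen balls blocks a direct Vitali/Lebesgue differentiation argument (averages of $L^1$ functions over elongating rectangles need not converge a.e.), but replacing ``averages converge'' by ``density is essentially bounded'' is not a weaker requirement---it is a strictly stronger one and does not follow.

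The paper bypasses both obstacles by differentiating along a \emph{refining sequence of measurable partitions} rather than along metric or Bowen balls. It fixes a finite partition $\alpha$ of small diameter, sets $\alpha_n=\bigvee_{k=0}^n f^{-k}\alpha$, extends $\mu^\xi_x$ and $\lambda^i_x$ to measures $\tilde\mu^\xi_x$, $\tilde\lambda^i_x$ supported on $\xi(x)$, and invokes the martingale form of the Radon--Nikodym theorem along the increasing $\sigma$-algebras generated by $\alpha_n$:
\[
\lim_{n\to\infty}\frac{\tilde\mu^\xi_x(\alpha_n(x))}{\tilde\lambda^i_x(\alpha_n(x))}=k(x)\quad\text{for }\tilde\lambda^i_x\text{-a.e.\ }x.
\]
Martingale convergence requires no geometric regularity of the atoms $\alpha_n(x)$, which is exactly why the eccentricity of the Bowen balls never enters. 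Since the limit $k(x)$ is a.e.\ finite, the $-\frac1n\log$ of the ratio vanishes in the limit and one gets equality of the two $\liminf$'s along $\alpha_n(x)$, after which the elementary inclusion $\alpha_n(x)\subset V(f,x,n,\varepsilon)$ replaces $\tilde\lambda^i_x(\alpha_n(x))$ by (a multiple of) $\lambda^i_x(V^i(f,x,n,C\varepsilon))$. Integration then goes through Fatou's lemma together with Proposition~\ref{equivalence}, which characterizes $h^i_\mu(f)$ as the supremum of $\limsup_n\frac1n H^i(\alpha_n|\xi^i)$; this is the step that turns the pointwise estimate into the entropy inequality, and it is absent from your sketch. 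If you want to keep the spirit of your argument, you should replace the Egorov/essential-boundedness step by the martingale differentiation along $\alpha_n$, and route the integration step through Proposition~\ref{equivalence} exactly as in the paper.
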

\begin{proof}
  Let $\alpha$ be a finite Borel partition of $M$ with $diam(\alpha)\leq\varepsilon$ and let $\alpha_n\colon=\alpha\vee f^{-1}\alpha\vee\dots\vee f^{-n}\alpha$.

  Set $\mathscr A$ be the $\sigma$-algebra generated by partitions $\alpha_n$, $n\geq 0$. Let $\tilde{\mu}_x^{\xi}$, $\tilde{\lambda}_x^i$ be two measures on $M$ satisfying
  \[
  \tilde{\mu}_x^{\xi}(M\setminus\xi(x))=0, \tilde{\mu}_x^{\xi}|_{\xi(x)}=\mu^{\xi}_x,
  \]
  and
  \[
  \tilde{\lambda}_x^i(M\setminus\xi(x))=0, \tilde{\lambda}_x^i|_{\xi(x)}=\mu^{\xi}_x.
  \]
It is easy to verify that $\tilde{\mu}_x^{\xi}\ll\tilde{\lambda}_x^i$.
  Let $k\colon M\to \mathbbm R$ be a $\tilde{\lambda}_x^i$-integrable function with respect to $\mathscr A$ such that
  \begin{equation}\label{eq:measure}
  \int_{A}kd\tilde{\lambda}_x^i=\tilde{\mu}_x^{\xi}(A), \forall A\in\mathscr A.
  \end{equation}
  Such a function exists because that $\tilde{\mu}_x^{\xi}\ll\tilde{\lambda}_x^i$. It follows from (\ref{eq:measure}) that
  \begin{equation}\label{measurelimit}
    \underset{n\to \infty}{\lim}\frac{\tilde{\mu}_x^{\xi}(\alpha_n(x))}{\tilde{\lambda}_x^i(\alpha_n(x))}=k(x), \text{for $\tilde{\lambda}_x^i$-a.e. $x\in M$}.
  \end{equation}
And hence we have
    \[
    -\frac{1}{n}\log\tilde{\mu}_x^{\xi}(\alpha_n(x))=-\frac{1}{n}\log\tilde{\lambda}_x^i(\alpha_n(x))-\frac{1}{n}\log \frac{\tilde{\mu}_x^{\xi}(\alpha_n(x))}{\tilde{\lambda}_x^i(\alpha_n(x))}.
    \]
Using (\ref{measurelimit}), we obtain
    \[
      \underset{n\to\infty}{\liminf}-\frac{1}{n}\log\tilde{\mu}_x^{\xi}(\alpha_n(x))=\underset{n\to\infty}{\liminf}\frac{1}{n}(-\log\tilde{\lambda}_x^i(\alpha_n(x))).
    \]
Note that $\{y \in M\colon d(f^k(y),f^k(x))<\varepsilon, 0\leq k\leq n-1\}=\colon V(f,x,n,\varepsilon) \supset \alpha_n(x)$, so we have
    \[
      \underset{n\to\infty}{\liminf}-\frac{1}{n}\log\tilde{\mu}_x^{\xi}(\alpha_n(x))=\underset{n\to\infty}{\liminf}\frac{1}{n}(-\log\tilde{\lambda}_x^i(V(f, x,n,\varepsilon))).
    \]
Observe that for $\varepsilon_0>0$ small enough, we can take $C>0$, such that for any $x\in M$,
    \[
    \frac{1}{C}d(y,z)\leq d^i(y,z)\leq Cd(y,z), \forall y,z \in \overline{W^i(x,\varepsilon_0)}.
    \]
Notice the relationship between $\tilde{\mu}_x^{\xi}$ and $\mu_x^{\xi}$, $\tilde{\lambda}_x^i$ and $\lambda_x^i$ respectively, if $\varepsilon\leq\varepsilon_0$ is small enough, we have
    \begin{equation}\label{newmeasurelimits}
    \underset{n\to\infty}{\liminf}-\frac{1}{n}\log\mu_x^{\xi}(\alpha_n(x)\cap\xi(x))\geq\underset{n\to\infty}{\liminf}\frac{1}{n}(-\log\lambda_x^i(V^i(f, x,n,C\varepsilon))).
    \end{equation}
Integrating both side of (\ref{newmeasurelimits}), we obtain
    \begin{align}\label{integration}
      & \int_{\Gamma_i}\underset{n\to\infty}{\liminf}-\frac{1}{n}\log\mu_x^{\xi}(\alpha_n(x)\cap\xi(x))\mathrm{d}\mu(x)\notag\\
      \geq{} & \int_{\Gamma_i}\underset{n\to\infty}{\liminf}\frac{1}{n}(-\log\lambda_x^i(V^i(f,x,n,C\varepsilon)))\mathrm{d}\mu(x).
    \end{align}\
    By Fatou's Lemma we obtain
    \begin{align}\label{eq:Fatou}
      \underset{n\to\infty}{\limsup}\frac{1}{n}H^i(\alpha_n|\xi) &= \underset{n\to\infty}{\limsup}\frac{1}{n}\int_{\Gamma_i}-\log\mu_x^{\xi}(\alpha_n(x)\cap\xi(x))\mathrm{d}\mu(x)\notag\\
      {} &\geq \underset{n\to\infty}{\liminf}\frac{1}{n}\int_{\Gamma_i}-\log\mu_x^{\xi}(\alpha_n(x)\cap\xi(x))\mathrm{d}\mu(x)\notag\\
      {} &\geq \int_{\Gamma_i}\underset{n\to\infty}{\liminf}-\frac{1}{n}\log\mu_x^{\xi}(\alpha_n(x)\cap\xi(x))\mathrm{d}\mu(x).
    \end{align}
    Hence by Proposition \ref{equivalence}, we have
    \begin{equation}\label{eq:Wu}
      h^i_{\mu}(f)\geq\underset{n\to\infty}{\limsup}\frac{1}{n}H^i(\alpha_n|\xi).
    \end{equation}
Combining (\ref{integration}), (\ref{eq:Fatou}), and (\ref{eq:Wu}), we obtain
    \begin{equation}\label{entropyint}
      h^i_{\mu}(f)\geq\int_{\Gamma_i}\underset{n\to\infty}{\liminf}\frac{1}{n}(-\log\lambda_x^i(V^i(f,x,n,C\varepsilon)))\mathrm{d}\mu(x).
    \end{equation}

Let$\{\varepsilon_k\}_{k\geq 1}$ be a sequence such that $\varepsilon_k>0$ and $\varepsilon\to 0$ as $k\to\infty$. Then by the monotone convergence theorem, we have
    \begin{align}\label{final}
      & \underset{\varepsilon\to0}{\lim}\int_{\Gamma_i}\underset{n\to\infty}{\liminf}\frac{1}{n}(-\log\lambda_x^i(V^i(f,x,n, C\varepsilon)))\mathrm{d}\mu(x)\notag\\
      ={} & \underset{k\to\infty}{\lim}\int_{\Gamma_i}\underset{n\to\infty}{\liminf}\frac{1}{n}(-\log\lambda_x^i(V^i(f,x,n, C\varepsilon_k)))\mathrm{d}\mu(x)\notag\\
      ={} & \int_{\Gamma_i}\underset{k\to\infty}{\lim}\underset{n\to\infty}{\liminf}\frac{1}{n}(-\log\lambda_x^i(V^i(f,x,n, C\varepsilon_k)))\mathrm{d}\mu(x)\notag\\
      ={} & \int_{\Gamma_i}\underset{\varepsilon\to0}{\lim}\underset{n\to\infty}{\liminf}\frac{1}{n}(-\log\lambda_x^i(V^i(f,x,n, C\varepsilon)))\mathrm{d}\mu(x).
    \end{align}

     Therefore (\ref{eq:lemma}) follows from (\ref{entropyint}) and (\ref{final}).
\end{proof}

Before going into the proof of Theorem \ref{mainresult}, we need a technical lemma from  \cite{SunTian2010}. In the statement of the lemma, we will use the following definition from  \cite{Mane1981}.
\begin{definition}
  Let $E$ be a normed space and $E=E_1\oplus E_2$ be a splitting. Define $\gamma(E_1,E_2)$ as the supremum of norms of the projections $\pi\colon E\to E_i$, $i=1, 2$,
  associated with the splitting. Moreover, we say that a subset $G\subset E$ is a $(E_1,E_2)$-graph if there exists an open set $U\subseteq E_2$ and a $C^1$ map $\psi\colon U\to E_1$ satisfying
  \[
  G=\{x+\psi(x)|x\in U\}.
  \]
The number $\sup\{\frac{\Vert\psi(x)-\psi(y)\Vert}{\Vert x-y\Vert}|x\neq y\in U\}$ is called the dispersion of $G$.
\end{definition}

The following lemma about graph transform on dominated bundles is a generalization of Lemma 3 in  \cite{Mane1981} by Sun and Tian \cite{SunTian2010}.
\begin{lemma}
  Given $\alpha>0$, $\beta>0$ and $c>0$, there exists $\tau>0$ with the following property. If $E$ is a finite-dimensional normed space and $E=E_1\oplus E_2$ a splitting with $\gamma(E_1,E_2)\leq\alpha$, and $\mathcal{F}$ is a $C^1$ embedding of a ball $B_{\delta}(0)\subset E$ into another Banach space $E'$ satisfying
\begin{enumerate}[label=(\arabic*)]
  \item $D_0\mathcal{F}$ is an isomorphism and $\gamma((D_0\mathcal{F})E_1, (D_0\mathcal{F})E_2)\leq\alpha$;
  \item $\Vert D_0\mathcal{F}-D_x\mathcal{F}\Vert\leq\tau$ for all $x\in B_{\delta}(0)$;
  \item $\frac{\Vert D_0\mathcal{F}|_{E_1}\Vert}{m(D_0\mathcal{F}|_{E_2})}\leq\frac{1}{2}$;
  \item $m(D_0\mathcal{F}|_{E_2})\geq\beta$;
\end{enumerate}
  then for every $(E_1,E_2)$-graph $G$ with dispersion $\leq c$ contained in the ball $B_{\delta(0)}$, its image $\mathcal{F}(G)$ is a $((D_0\mathcal{F})E_1,(D_0\mathcal{F})E_2)$-graph with dispersion$\leq c$.
  \end{lemma}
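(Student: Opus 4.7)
The plan is to execute the classical graph transform argument in this dominated splitting setting. Write $\mathcal{F}=D_0\mathcal{F}+R$, so that hypothesis (2) together with the mean value inequality yields $\Vert D_xR\Vert\leq\tau$ throughout $B_\delta(0)$. Given an $(E_1,E_2)$-graph $G=\{y+\psi(y):y\in U\}$ with $U\subset E_2$ open and dispersion $\mathrm{Lip}(\psi)\leq c$, I would parametrize $\mathcal{F}(G)$ by $y\mapsto\mathcal{F}(y+\psi(y))$ and project onto the new splitting $(D_0\mathcal{F})E_1\oplus(D_0\mathcal{F})E_2$ via its associated projections $\pi_1',\pi_2'$, whose norms are both bounded by $\alpha$ thanks to hypothesis (1) and the definition of $\gamma$.

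The first step is to exhibit the image as a graph. Define $\Phi(y):=\pi_2'\mathcal{F}(y+\psi(y))$. Since $D_0\mathcal{F}$ respects the splitting, $\pi_2'\circ D_0\mathcal{F}|_{E_2}=D_0\mathcal{F}|_{E_2}$ and $\pi_2'\circ D_0\mathcal{F}|_{E_1}=0$, so
\[
\Phi(y)=D_0\mathcal{F}|_{E_2}(y)+\pi_2'R(y+\psi(y)).
\]
The perturbation has derivative of norm at most $\alpha\tau(1+c)$ (using $\Vert\mathrm{id}+D\psi\Vert\leq 1+c$), while $m(D_0\mathcal{F}|_{E_2})\geq\beta$ by (4). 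Hence once $\tau$ is small enough that $\alpha\tau(1+c)<\beta$, the map $\Phi$ is a $C^1$ diffeomorphism onto an open set $V\subset(D_0\mathcal{F})E_2$, and
\[
\tilde\psi:=\pi_1'\circ\mathcal{F}\circ(\mathrm{id}+\psi)\circ\Phi^{-1}\colon V\to(D_0\mathcal{F})E_1
\]
realizes $\mathcal{F}(G)$ as a $((D_0\mathcal{F})E_1,(D_0\mathcal{F})E_2)$-graph.

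The second step is the dispersion estimate. Set $a:=\Vert D_0\mathcal{F}|_{E_1}\Vert$ and $b:=m(D_0\mathcal{F}|_{E_2})$, so hypothesis (3) reads $a\leq b/2$. A direct mean value computation for $y,y'\in U$ yields
\[
\Vert\tilde\psi(\Phi(y))-\tilde\psi(\Phi(y'))\Vert\leq\bigl(ac+\alpha\tau(1+c)\bigr)\Vert y-y'\Vert
\]
and
\[
\Vert\Phi(y)-\Phi(y')\Vert\geq\bigl(b-\alpha\tau(1+c)\bigr)\Vert y-y'\Vert,
\]
so the dispersion of $\mathcal{F}(G)$ is at most $\tfrac{ac+\alpha\tau(1+c)}{b-\alpha\tau(1+c)}$. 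Using $a\leq b/2$, elementary algebra shows this quantity is $\leq c$ provided $\alpha\tau(1+c)^2\leq bc/2$, which holds uniformly once $\tau\leq\frac{\beta c}{2\alpha(1+c)^2}$.

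The main obstacle is the bookkeeping: choosing a single $\tau=\tau(\alpha,\beta,c)$ that simultaneously guarantees invertibility of $\Phi$ and the dispersion bound, uniformly in the ambient space $E$, the splitting, and the particular map $\mathcal{F}$. The domination hypothesis (3) is precisely what supplies the gap $b-a\geq b/2$ needed so that the new dispersion does not exceed the old one; without it, the best bound of the form $(a/b)c+O(\tau)$ would typically grow under iteration, and the lemma would fail.
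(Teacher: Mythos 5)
The paper does not actually prove this lemma — it is quoted verbatim from Sun--Tian \cite{SunTian2010}, itself a generalization of Lemma~III.3 of Ma\~n\'e \cite{Mane1981}, and the argument given there is precisely the graph-transform computation you carry out. So your proposal takes essentially the same route as the source the paper relies on, and the computation is correct. The decomposition $\mathcal{F}=D_0\mathcal{F}+R$ with $\Vert D_xR\Vert\leq\tau$ on the convex ball $B_\delta(0)$ justifies the mean-value estimates; the identity
\[
\Phi(y)=D_0\mathcal{F}|_{E_2}(y)+\pi_2'R(y+\psi(y))
\]
follows from $\pi_2'\circ D_0\mathcal{F}|_{E_2}=D_0\mathcal{F}|_{E_2}$, $\pi_2'\circ D_0\mathcal{F}|_{E_1}=0$; the Lipschitz lower bound $\Vert\Phi(y)-\Phi(y')\Vert\geq(b-\alpha\tau(1+c))\Vert y-y'\Vert$ you derive for the dispersion estimate simultaneously gives global injectivity of $\Phi$, so combined with the invertibility of $D\Phi$ it is a diffeomorphism onto an open $V\subset(D_0\mathcal{F})E_2$ and $\mathcal{F}(G)$ is genuinely the graph of $\tilde\psi$. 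The dispersion bound $\frac{ac+\alpha\tau(1+c)}{b-\alpha\tau(1+c)}\leq c$ reduces, as you observe, to $\alpha\tau(1+c)^2\leq bc/2$, and with $b\geq\beta$ the choice $\tau\leq\frac{\beta c}{2\alpha(1+c)^2}$ works and also ensures $\alpha\tau(1+c)<\beta$ (since $\frac{c}{2(1+c)}<1$). Crucially, this $\tau$ depends only on $(\alpha,\beta,c)$, as the lemma demands. I see no gap.
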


  The following lemma is also useful for the proof of Theorem \ref{mainresult}.
  \begin{lemma}\label{graph}
  Let $g\in \mathrm{Diff}^1(M)$ and $\Lambda$ be a $g$-invariant subset of $M$. If there is a $(1,i(x))$-dominated splitting on $\Lambda$: $T_{\Lambda}M=E\oplus F$, then for any $c>0$, there exists $\delta >0$ such that for every $x\in \Lambda$ and any $(E_x,F_x)$-graph $G$ with dispersion $\leq c$ contained in the Bowen ball $V(g,x,n,\delta)(n\geq 0)$, its image $g^n(G)$ is a $(D_xg^nE_x,D_xg^nF_x)$-graph with dispersion $\leq c$.
  \end{lemma}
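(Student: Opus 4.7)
The strategy is to iterate the preceding (Mañé--Sun--Tian) graph transform lemma $n$ times along the orbit segment $x, gx, \dots, g^{n-1}x$, using compactness of $M$ to render all its hypotheses uniform. I would work in exponential charts: for each $y\in M$, set $\mathcal{F}_y=\exp_{gy}^{-1}\circ g\circ\exp_y$, which is a $C^1$ embedding of a small ball in $T_yM$ into $T_{gy}M$ with $D_0\mathcal{F}_y=D_yg$. Under this identification, the $(1,i(x))$-dominated splitting on $\Lambda$ says precisely that hypothesis (3) of the preceding lemma holds pointwise at every $y=g^jx\in\Lambda$.

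I would uniformize the remaining constants as follows. Since the dominated splitting over $\Lambda$ extends continuously to $\overline{\Lambda}\subset M$ (standard consequence of domination) and $\overline{\Lambda}$ is compact, one may set $\alpha=\sup_{y\in\overline{\Lambda}}\gamma(E_y,F_y)<\infty$. Because $g\in\Diff^1(M)$ and $M$ is compact, put $\beta=\inf_{y\in M} m(D_yg)>0$, so in particular $m(D_yg|_{F_y})\geq\beta$ for every $y\in\Lambda$. Apply the preceding lemma to the given $c$ and to these $\alpha,\beta$ to obtain $\tau>0$. Finally, use uniform continuity of $Dg$ and of $y\mapsto D\exp_y$ to pick $\delta>0$ so small that, for every $y\in M$, $\exp_y$ is a diffeomorphism on the ball of radius $2\delta$ and $\|D_0\mathcal{F}_y-D_v\mathcal{F}_y\|\leq\tau$ for all $v$ in the $\delta$-ball of $T_yM$; shrinking $\delta$ further if necessary, we may also absorb the (arbitrarily small) distortion of the chart into the dispersion bound $c$.

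Now run the induction on $j=0,1,\dots,n-1$. At stage $j$, read $g^j(G)$ in the chart at $g^jx$; by the Bowen condition $G\subset V(g,x,n,\delta)$, this set lies in the $\delta$-ball of $T_{g^jx}M$, and by the inductive hypothesis it is an $(E_{g^jx},F_{g^jx})$-graph with dispersion $\leq c$. All four hypotheses of the preceding lemma now hold for $\mathcal{F}_{g^jx}$: (1) from $Dg$-invariance of the splitting and the choice of $\alpha$; (2) from the choice of $\delta$ and $\tau$; (3) from $(1,i(x))$-domination; (4) from the choice of $\beta$. The conclusion of that lemma gives that $g^{j+1}(G)$ is a $(D_{g^jx}g\,E_{g^jx},D_{g^jx}g\,F_{g^jx})=(E_{g^{j+1}x},F_{g^{j+1}x})$-graph with dispersion $\leq c$. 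After $n$ steps, $g^n(G)$ is an $(E_{g^nx},F_{g^nx})=(D_xg^n E_x,D_xg^n F_x)$-graph with dispersion $\leq c$, which is exactly the claim.

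The main obstacle I anticipate is the bookkeeping between the tangent-space picture (where the preceding lemma is stated) and the manifold picture (where the Bowen ball and the iterates $g^j$ live): one must verify at each step that the iterate $\exp_{g^jx}^{-1}g^j(G)$ is genuinely contained in the $\delta$-ball of $T_{g^jx}M$ and remains a graph with dispersion $\leq c$ after passing through the chart change. This is handled by the Bowen condition plus the final shrinking of $\delta$. The only other subtle point is invoking the continuous extension of the dominated splitting to $\overline{\Lambda}$ in order to obtain the uniform angle bound $\alpha$; this is routine from the uniform domination inequality.
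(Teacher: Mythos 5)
Your proposal is correct and is essentially the same argument the paper delegates to Lemma~3.4 of Sun--Tian \cite{SunTian2010}: one iterates the preceding graph-transform lemma $n$ times along the orbit in exponential charts $\mathcal{F}_y=\exp_{gy}^{-1}\circ g\circ\exp_y$, with the constants $\alpha$, $\beta$, $\tau$, $\delta$ uniformized by the compactness of $M$ and the continuous extension of the dominated splitting to $\overline{\Lambda}$, and with the Bowen-ball hypothesis guaranteeing that each iterate $g^j(G)$ stays in the required $\delta$-ball so that the lemma can be applied at the next stage.
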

  \begin{proof}
    cf.  the proof of Lemma 3.4 in \cite{SunTian2010}.
  \end{proof}

  Now we are ready to prove Theorem \ref{mainresult}.
  Fix any $\varepsilon>0$. Take $N_0$ so large that the set $\Gamma_{i,\varepsilon}=\{x\in\Gamma\colon D_i(x)\leq N_0\}$ has $\mu$-measure larger than $\mu(\Gamma_i)-\varepsilon$. Let $N=N_0!$ and $g=f^N$, then the splitting $T_{\Gamma_{i,\varepsilon}}M=E\oplus F$ satisfies $(1,i(x))$-dominated with respect to $g$:
  \[
  \frac{\Vert Dg|_{E(x)}\Vert}{m(Dg|_{F(x)})}\leq \prod^{\frac{N}{D_i(x)}-1}_{j=0}\frac{\Vert Df^{D_i(x)}|_{E(f^{jD_i(x)}(x))}\Vert}{m(Df^{D_i(x)}|_{F(f^{jD_i(x)}(x))})}\leq (\frac{1}{2})^{\frac{N}{D_i(x)}}\leq \frac{1}{2}, \forall x \in\Gamma_{i,\varepsilon}.
  \]

  Note that $\Gamma_{i,\varepsilon}$ is $f$-invariant and thus is $g$-invariant. In what follows, in order to avoid a cumbersome and conceptually unnecessary use of coordinate charts, we shall treat $M$ as if it were a Euclidean space, and let $\lambda$ be the Lebesgue measure on $M$. The reader will observe that all our arguments can be easily formalized by a completely straightforward use of local coordinates.

  Since dominated splitting can be extended on the closure of $\Gamma_{i,\varepsilon}$, and dominated splitting is always continuous(see \cite{BonattiDIazViana2005}), we can fix two constants $c>0$ and $a>0$ so small that if $x\in \Gamma_{i,\varepsilon}$, $y\in M$ and $d(x,y)<a$, then for every linear subspace $E\subseteq T_yM$ which is a $(E(x),F(x))$-graph with dispersion $<c$ we have
  \[
  \vert\log\vert\det(D_yg)|_E\vert-\log\vert\det(D_xg)|_{F(x)}\vert\vert\leq\varepsilon.
  \]
 Thus
  \begin{equation}\label{detinequality}
  \vert\det(D_yg)|_E\vert\geq\det\vert(D_xg)|_{F(x)}\vert\cdot e^{-\varepsilon}.
  \end{equation}

  By Lemma \ref{graph}, there exists $\delta\in (0,a)$ such that for every $x\in \Gamma_{i,\varepsilon}$ and any $(E_x,F_x)$-graph $G$ with dispersion $\leq c$ contained in the ball $V(g,x,n,\delta)(n\geq 0)$. Its image $g^n(G)$ is a $((D_xg^n)E_x,(D_xg^n)F_x)$-graph with dispersion $\leq c$.

  The estimation of $h^i_{\mu}(f)$ from below is based on the following fact.
  \begin{fact}
    For every $x\in \Gamma_{i,\varepsilon}$, we have
    \[
      \underset{n\to\infty}{\liminf}\frac{1}{n}(-\log\lambda_x^i(V^i(g,x,n,\delta)))\geq N\sum_{j\leq u(i)}m_j(x)\lambda_j(x)-\varepsilon.
    \]
  \end{fact}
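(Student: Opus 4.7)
\emph{Overall strategy.} The idea is to estimate $\lambda^i_x(V^i(g,x,n,\delta))$ from above by pushing the Bowen ball forward under $g^n$ into a fixed-size set and then using the change of variables formula on the leaf $W^i$. This reduces the bound to controlling the Jacobian of $g^n$ along $W^i$ from below; the determinant inequality (\ref{detinequality}), combined with the graph transform Lemma \ref{graph}, lets us replace this Jacobian by the Jacobian of $g^n$ along $F(x)$, and the latter is handled by Oseledec's theorem applied to $g = f^N$.

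\emph{Setup, graph preservation, and change of variables.} Take $\delta < a$ small enough that $V^i(g,x,n,\delta)$ is an $(E(x),F(x))$-graph with dispersion $\leq c$; this is possible since $W^i(x)$ is tangent to $F(x)$ at $x$. By Lemma \ref{graph}, for every $0 \leq j \leq n$ the image $g^j(V^i(g,x,n,\delta))$ is a graph over $F(g^j x)$ with dispersion $\leq c$, and since this graph is $C^1$ its derivative has norm $\leq c$ pointwise. Hence for every $y \in V^i(g,x,n,\delta)$ and every $j$, the tangent space $T_{g^j y} W^i$ is itself a linear $(E(g^j x), F(g^j x))$-graph with dispersion $\leq c$. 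The change of variables formula on the leaf gives
$$\lambda^i_{g^n x}\!\bigl(g^n(V^i(g,x,n,\delta))\bigr) = \int_{V^i(g,x,n,\delta)} |\det(D_y g^n)|_{T_y W^i}|\, d\lambda^i_x(y),$$
whose left-hand side is bounded above by a constant $K = K(\delta)$ (a uniform volume bound for $\delta$-balls in unstable leaves), so it suffices to bound the integrand from below.

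\emph{Jacobian estimate and conclusion.} Factor the Jacobian as $|\det(D_y g^n)|_{T_y W^i}| = \prod_{j=0}^{n-1} |\det(D_{g^j y} g)|_{T_{g^j y} W^i}|$ and apply (\ref{detinequality}) at each $g^j x \in \Gamma_{i,\varepsilon}$ (which is $g$-invariant) with nearby point $g^j y$ satisfying $d(g^j x, g^j y) < \delta < a$, to obtain
$$|\det(D_{g^j y} g)|_{T_{g^j y} W^i}| \geq |\det(D_{g^j x} g)|_{F(g^j x)}| \cdot e^{-\varepsilon}.$$
Taking the product and using the $Dg$-invariance of $F$ yields $|\det(D_y g^n)|_{T_y W^i}| \geq |\det(D_x g^n)|_{F(x)}| \cdot e^{-n\varepsilon}$, whence
$$\lambda^i_x(V^i(g,x,n,\delta)) \leq K \cdot |\det(D_x g^n)|_{F(x)}|^{-1} \cdot e^{n\varepsilon}.$$
Taking $-\frac{1}{n}\log$, passing to $\liminf_{n\to\infty}$, and invoking Oseledec's theorem for $g = f^N$, according to which $\frac{1}{n}\log|\det(D_x g^n)|_{F(x)}|$ converges to $N \sum_{j \leq u(i)} m_j(x) \lambda_j(x)$, gives the claimed inequality.

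\emph{Main obstacle.} The crucial feature making the argument go through is that the constants $c$ and $\delta$ from Lemma \ref{graph} work \emph{uniformly} for all iterates $0 \leq j \leq n$ of an arbitrarily long Bowen ball. This uniformity is made possible precisely by passing from $f$ to $g = f^N$ with $N = N_0!$, since after this replacement the splitting becomes genuinely $(1, i(x))$-dominated on $\Gamma_{i,\varepsilon}$ and a single application of $Dg$ already produces the $1/2$ contraction needed for the graph transform. Without this preliminary uniformization, the dispersion bound would deteriorate along the orbit, the leafwise tangent spaces would no longer be uniform graphs, and the product estimate on the Jacobian would fail to yield the correct exponential rate.
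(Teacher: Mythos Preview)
Your argument is correct and uses the same core machinery as the paper---Lemma \ref{graph}, the determinant comparison (\ref{detinequality}), change of variables, and Oseledec's theorem---but your implementation differs from the paper's in one notable respect. The paper does \emph{not} work directly on the leaf: it first relates $\lambda^i_x(V^i(g,x,n,\delta))$ to the ambient Lebesgue measure $\lambda(V(g,x,n,\delta))$ of the full Bowen ball, then decomposes $V(g,x,n,\delta)$ via Fubini into affine slices $\Lambda_n(y)=(y+F(x))\cap V(g,x,n,\delta)$ parallel to $F(x)$, and applies the graph transform and Jacobian estimate to each such slice (which is trivially a graph of dispersion $0$). You instead treat the leafwise Bowen ball $V^i(g,x,n,\delta)\subset W^i(x)$ itself as an $(E(x),F(x))$-graph of small dispersion (legitimate since $W^i(x)$ is $C^1$ and tangent to $F(x)$), and run the Jacobian argument directly on the leaf volume $\lambda^i_x$. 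Your route is shorter and sidesteps the paper's comparison of $\lambda^i_x(V^i)$ with $\lambda(V)$; the price is that you must take $\delta$ small enough (possibly depending on $x$) that the local leaf $W^i(x,\delta)$ is already a graph with dispersion $\le c$, which is harmless since the Fact is ultimately fed into (\ref{eq:lemma}) through a limit $\varepsilon\to 0$.
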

  \begin{proof}
    Fix any $x\in \Gamma_{i,\varepsilon}$, there exists $B_x>0$ satisfying
    \begin{equation}\label{measurevs}
    \lambda_x^i(V^i(g,x,n,\delta))=B_x\lambda(V(g,x,n,\delta)).
    \end{equation}
It is clear that we can choose a positive constant $B_1$ such that $B_x\leq B_1$, for all $x\in M$. Fix $x\in\Gamma_{i,\varepsilon}$, now we consider the measure of $V(g,x,n,\delta)$, which we have that there is a constant $B>0$ satisfying
    \[
      \lambda(V(g,x,n,\delta))=B\int_{E(x)}\lambda[(y+F(x))\cap V(g,x,n,\delta)]\mathrm{d}\lambda(y),
    \]
    for all $n\geq 0$, where $\lambda$ also denotes the Lebesgue measure in the subspaces $E(x)$ and $y+F(x)$, $y\in E(x)$. Now we will show that
    \begin{equation}\label{eq:bridge}
    \underset{n\to\infty}{\liminf}\underset{y\in E(x)}{\inf}\frac{1}{n}[-\log\lambda(\Lambda_n(y))]\geq N\sum_{j\leq u(i)}m_j(x)\lambda_j(x)-\varepsilon,
    \end{equation}
    where
    \[
    \Lambda_n(y)=(y+F(x))\cap V(g,x,n,\delta).
    \]
If $\Lambda_n(y)$ is not empty, by Lemma \ref{graph}, we have $g^n(\Lambda_n(y))$ is a $(E(g^n(x)),F(g^n(x)))$-graph with dispersion $\leq c$.

Take $D>0$ such that $D>\mathrm{vol}(G)$ for every $(E(w),F(w))$-graph $G$ with dispersion $\leq c$ contained in $B_{\delta}(w)$,$w\in \Gamma_{i,\varepsilon}$. Observe that
    \[
    g^n(\Lambda_n(y))\subseteq g^nV(g,x,n,\delta)\subseteq B_{\delta}(g^n(x)),g^n(x)\in \Gamma_{i,\varepsilon}.
    \]
We have
    \[
    D>\mathrm{vol}(g^n(\Lambda_n(y)))=\int_{\Lambda_n(y)}\vert\det(D_zg^n)\vert|_{T_z\Lambda_n(y)}\vert \mathrm{d}\lambda(z).
    \]
Since
    \[
    g^j(\Lambda_n(y))\subseteq g^jV(g,x,n,\delta)\subseteq B_{\delta}(g^j(x))\subseteq B_a(g^j(x)), j=0,1,2, \dots, n,
    \]
    we have for any $z\in \Lambda_n(y)$,
    \[
    d(g^j(z),g^j(x))<a, j=0, 1, 2, \dots, n.
    \]
By (\ref{detinequality}), we have
    \begin{align*}
        & \vert\det(D_zg^n)|_{T_z\Lambda_n(y)}\vert \\
      ={} & \prod_{j=0}^{n-1}\vert\det(D_{g^j(z)}g)|_{T_{g^j(z)}g^j\Lambda_n(y)}\vert \\
      \geq{} & \prod_{j=0}^{n-1}[\vert\det(D_{g^j(x)}g)|_{F(g^j(x))}\vert\cdot e^{-\varepsilon}] \\
      ={} & \vert\det(D_xg^n)|_{F(x)}\vert\cdot e^{-n\varepsilon}.
    \end{align*}
Hence,
    \begin{align*}
      \frac{1}{n}\log D &\geq \frac{1}{n}\log \int_{\Lambda_n(y)}\vert\det(D_zg^n)\vert|_{T_z\Lambda_n(y)}\vert\mathrm{d}\lambda(z)\\
      {} &\geq \frac{1}{n}\log \int_{\Lambda_n(y)} \vert\det(D_xg^n)|_{F(x)}\vert\cdot e^{-n\varepsilon}\mathrm{d}\lambda(z)\\
      {} &= \frac{1}{n}\log [\lambda(\Lambda_n(y))\cdot\vert\det(D_xg^n)|_{F(x)}\vert\cdot e^{-n\varepsilon}]\\
      {} &= \frac{1}{n}\log \lambda(\Lambda_n(y))+\frac{1}{n}\log \vert\det(D_xg^n)|_{F(x)}\vert-\varepsilon.
    \end{align*}
It follows that
    \[
    \underset{n\to\infty}{\lim}-\frac{1}{n}\log \lambda(\Lambda_n(y))\geq\frac{1}{n}\log \vert\det(D_xg^n)|_{F(x)}\vert-\varepsilon.
    \]
Combining this inequality and the fact from Oseledec Theorem \cite{Oseledec1968}, we obtain
    \[
    \underset{n\to\infty}{\lim}-\frac{1}{n}\log\vert\det(D_xg^n)|_{F(x)}\vert=N\sum_{j\leq u(i)}m_j(x)\lambda_j(x),
    \]

    Now we have completed the proof of (\ref{eq:bridge}), then using (\ref{measurevs}), we obtain
    \begin{align*}
        & \underset{n\to\infty}{\liminf}\frac{1}{n}(-\log\lambda_x^i(V^i(g,x,n,\delta))) \\
      {}= & \underset{n\to\infty}{\liminf}\frac{1}{n}(-\log B_x\lambda(V(g,x,n,\delta))) \\
      {}\geq & \underset{n\to\infty}{\liminf}\frac{1}{n}(-\log B_1\lambda(V(g,x,n,\delta))) \\
      {}= & \underset{n\to\infty}{\liminf}\frac{1}{n}(-\log B_1B\int_{E(x)}\lambda[(y+F(x))\cap V(g,x,n,\delta)]\mathrm{d}\lambda(y)\\
      {}\geq & N\sum_{j\leq u(i)}m_j(x)\lambda_j(x)-\varepsilon,
    \end{align*}
    which completes the proof of the fact.
  \end{proof}

  Now we can complete the estimation of $h^i_{\mu}(f)$ from below.

  By (\ref{eq:lemma}), we have
  \[
  h^i_{\mu}(g)\geq\int_{\Gamma_i}\underset{\varepsilon\to 0}{\lim}\underset{n\to\infty}{\liminf}\frac{1}{n}(-\log\lambda_x^i(V^i(g,x,n, \varepsilon)))\mathrm{d}\mu(x).
  \]
  Using the {\bf{fact}}, we obtain
  \begin{align*}
    h^i_{\mu}(g) &\geq \int_{\Gamma_{i,\varepsilon}}N\sum_{j\leq u(i)}m_j(x)\lambda_j(x)-\varepsilon\mathrm{d}\mu(x) \\
    {} &= \int_{\Gamma_i}N\sum_{j\leq u(i)}m_j(x)\lambda_j(x)\mathrm{d}\mu(x)-\int_{\Gamma_i\setminus\Gamma_{i,\varepsilon}}N\sum_{j\leq u(i,x)}m_j(x)\lambda_j(x)\mathrm{d}\mu(x)\\
    {} &-\varepsilon\mu(\Gamma_{i,\varepsilon}) \\
    {} &\geq \int_{\Gamma_i}N\sum_{j\leq u(i)}m_j(x)\lambda_j(x)\mathrm{d}\mu(x)-NC\dim(M)\varepsilon-\varepsilon,
  \end{align*}
  where $C=\underset{x\in M}{\max}\log\Vert D_xf\Vert$.
  Hence,
  \[
  h^i_{\mu}(f)=\frac{1}{N}h^i_{\mu}(g)\geq\int_{M}N\sum_{j\leq u(i)}m_j(x)\lambda_j(x)\mathrm{d}\mu(x)-NC\dim(M)\varepsilon-\varepsilon.
  \]

  Since $\varepsilon$ is arbitrary, this completes the estimation of $h^i_{\mu}(f)$ from below.



\begin{thebibliography}{99}

\bibitem{AbdenurBonattiCrovisier2011}
\newblock F. Abdenur, C. Bonatti and S. Crovisier,
\newblock Nonuniform hyperbolicity for ${C}^1$-generic diffeomorphisms,
\newblock \emph{Israel J. of Math.}, \textbf{183} (2011), 1--60.

\bibitem{BonattiDIazViana2005}
\newblock C. Bonatti, L. D{\'i}az and M. Viana,
\newblock \emph{Dynamics beyond Uniform Hyperbolicity: A Global Geometric and Probabilistic Perspestive}, vol. 102 of Encyclopedia Math. Sci.,
\newblock Springer-Verlag, Berlin, 2005.

\bibitem{CatsigerasCerminaraEnrich2015}
\newblock E. Catsigeras, M. Cerminara and H. Enrich,
\newblock The Pesin entropy formula for ${C}^1$ diffeomorphisms with dominated splitting,
\newblock \emph{Ergodic Theory Dynam. Systems}, \textbf{35} (2015), 737--761.

\bibitem{Wu2015}
\newblock H.-Y. Hu, Y.-X. Hua and W.-S. Wu,
\newblock Unstable entropies and variational principle for partially hyperbolic diffeomorphisms,
\newblock \emph{Advances in Mathematics}, \textbf{321} (2017), 31--68.

\bibitem{LedrappierYoung1985}
\newblock F. Ledrappier and L.-S. Young,
\newblock The metric entropy of diffeomorphisms: Part I: Characterization of measures satisfying Pesin's entropy formula,
\newblock \emph{Ann. of Math.}, \textbf{122} (1985), 509--539.

\bibitem{LedrappierYoung1985a}
\newblock F. Ledrappier and L.-S. Young,
\newblock The metric entropy of diffeomorphisms: Part II: Relations between entropy, exponents and dimension,
\newblock \emph{Ann. of Math.}, \textbf{122} (1985), 540--574.

\bibitem{LiuQian1995}
\newblock P.-D. Liu and M.~Qian,
\newblock \emph{Smooth Ergodic Theory of Random Dynamical Systems}, vol. 1606 of Lecture Notes in Math.,
\newblock Springer-Verlag, Berlin, 1995.

\bibitem{Mane1981}
\newblock R. Ma{\~n}{\'e},
\newblock A proof of {P}esin's formula,
\newblock \emph{Ergodic Theory Dynam. Systems}, \textbf{1} (1981), 95--102.

\bibitem{Oseledec1968}
\newblock V. I. Oseledec,
\newblock A multiplicative ergodic theorem. Lyapunov characteristic numbers for dynamical systems,
\newblock \emph{Trans. Moscow Math. Soc.}, \textbf{19} (1968), 197--231.

\bibitem{Pesin1977}
\newblock Y. B. Pesin,
\newblock Characteristic Lyapunov exponents and smooth ergodic theory,
\newblock \emph{Russian Math. Surveys}, \textbf{32} (1977), 55--112.

\bibitem{PesinSinai1982}
\newblock Y. B. Pesin and Y. G. Sinai,
\newblock Gibbs measures for partially hyperbolic attractors,
\newblock \emph{Ergodic Theory Dynam. Systems}, \textbf{2} (1982), 417--438.

\bibitem{Pujals2007}
\newblock E. R. Pujals,
\newblock From hyperbolicity to dominated splitting,
\newblock in \emph{Partially Hyperbolic Dynamics, Laminations, and Teichm{\"u}ller Flow} (eds. G.~Forni, M.~Lyubich, C.~Pugh and M.~Shub), vol.~51 of Fields Inst. Commun., Amer. Math. Soc., Providence, RI, (2007), 89--102.

\bibitem{Rokhlin1962}
\newblock V. A. Rokhlin,
\newblock On the fundamental ideas of measure theory,
\newblock \emph{Amer. Math. Soc. Transl.}, \textbf{1} (1962), 1--54.

\bibitem{Ruelle1978}
\newblock D. Ruelle,
\newblock An inequality for the entropy of differentiable maps,
\newblock \emph{Bull. Braz. Math. Soc.}, \textbf{9} (1978), 83--87.

\bibitem{Sambarino2014}
\newblock M. Sambarino,
\newblock A (short) survey on dominated splitting,  \emph{Mathematical Congress of the Americas},  149--183, Contemp. Math., 656, Amer. Math. Soc., Providence, RI, 2016.

\bibitem{SunTian2010}
\newblock W.-X. Sun and X.-T. Tian,
\newblock Dominated splitting and {P}esin's entropy formula,
\newblock \emph{Discrete Contin. Dyn. Syst.}, \textbf{32} (2010), 1421--1434.

\bibitem{Tian2014}
\newblock X.-T. Tian,
\newblock Pesin's entropy formula for systems between ${C}^1$ and ${C}^{1+\alpha}$,
\newblock \emph{J. Stat. Phys.}, \textbf{156} (2014), 1184--1198.

\end{thebibliography}
\end{document}